\newtheorem{theorem}{Theorem}[section]
\newtheorem{conjecture}[subsection]{Conjecture}
\newtheorem{lemma}[theorem]{Lemma}
\newtheorem{proposition}[theorem]{Proposition}
\theoremstyle{definition}
\newtheorem{remark}[theorem]{Remark}
\numberwithin{equation}{section}
\newtheorem{definition}[theorem]{Definition}
\newtheorem{proposition-definition}{Proposition-Definition}
\newcommand{\Id}{\text{Id}}
\title[ On a Conjecture of Drinfeld]{On a Conjecture of Drinfeld}
\date{}
\begin{document}
\author{SARBESWAR PAL}  
\email{sarbeswar11@gmail.com, spal@iisertvm.ac.in}
\address{IISER Thiruvananthapuram, Maruthamala P. O., Kerala 695551}
\keywords{rational curve, wobbly bundles, Fano manifold} 
\subjclass[2010]{14J60}

\begin{abstract}
 Let $C$ be a smooth irreducible irreducible projective curve of genus $g \ge 2$. 
Let $\mathcal{M}_C(n, \delta)$ be the moduli space of semi-stable vector bundles on $C$ of rank $n$ and fixed determinant $\delta$ of degree $d$. Then the locus of wobbly bundles is known to be closed in $\mathcal{M}_C(n, \delta)$. It was announced by Laumon and attributed to Drinfeld that  the wobbly locus is pure of co-dimension one,  i.e., they form a divisor in $\mathcal{M}_C(n, \delta)$. This is now known as Drinfeld's conjecture.
In this article, we will give a proof of the conjecture when $n$ and $d$ are coprime. 
\end{abstract}

\maketitle

\section{Introduction}
  Let $C$ be a smooth irreducible projective complex curve of genus $g\geq 2$ and let $K_C$ be its canonical line bundle. Fix a line bundle $\delta$ of degree $d$ on $C$ and let $\mathcal{M}_C(n, \delta)$ be the coarse moduli space parameterizing the semi-stable vector bundles of rank $n$ and fixed determinant $\delta$ over $C$. Recall, a vector bundle $E$ is called very stable if $E$ has no nonzero nilpotent Higgs field $\phi \in H^0(C, \text{Ad}(E) \otimes K_C)$. When $g \geq 2$, G. Laumon proved that a very stable vector bundle is stable and that the locus of very stable bundles is a non-empty open subset of $\mathcal{M}_C(n,\delta)$ \cite[Proposition 3.5]{L}. A  vector bundle is called wobbly if it is not very stable. Hence, the locus of wobbly bundles is a closed subset 
$\mathcal{W}_C  \subset \mathcal{M}_C(n, \delta).$

\begin{conjecture}\label{NC1}
$($Drinfeld, \cite[Remarque 3.6 (ii)]{L}$)$ $\mathcal{W}_C$ is of pure codimension one. 
 \end{conjecture}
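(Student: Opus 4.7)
The strategy is to work in the Hitchin moduli space $\mathcal{H}$ of stable Higgs bundles $(E,\phi)$ on $C$ with $\det E=\delta$ and $\operatorname{tr}\phi=0$. On the stable locus $\mathcal{H}$ is the cotangent bundle $T^{*}\mathcal{M}_C(n,\delta)$, and the Hitchin map $h:\mathcal{H}\to B$ is proper with equidimensional Lagrangian fibres of dimension $\dim\mathcal{M}_C(n,\delta)$. The nilpotent cone $\mathcal{N}=h^{-1}(0)$ is therefore an equidimensional Lagrangian subvariety of $T^{*}\mathcal{M}_C(n,\delta)$. By the very definition of wobbly, $\mathcal{W}_C$ is the image of $\mathcal{N}\setminus Z$ under the projection $\pi:\mathcal{H}\to\mathcal{M}_C(n,\delta)$, where $Z\subset\mathcal{N}$ is the zero section.

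Let $\mathcal{N}_1,\ldots,\mathcal{N}_r$ denote the irreducible components of $\mathcal{N}$ other than $Z$. Each is Lagrangian in $T^{*}\mathcal{M}_C(n,\delta)$, and a standard fact is that every irreducible Lagrangian subvariety of a cotangent bundle is the closure of the conormal variety to its image. Consequently $W_i:=\overline{\pi(\mathcal{N}_i)}$ is irreducible and
\[
 \dim W_i+\dim\bigl(\pi^{-1}(E)\cap\mathcal{N}_i\bigr)=\dim\mathcal{M}_C(n,\delta)
\]
for general $E\in W_i$. The irreducible components of $\mathcal{W}_C$ are exactly the $W_i$, so the conjecture is equivalent to the statement that the generic fibre of $\pi|_{\mathcal{N}_i}$ is one-dimensional for each $i$.

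To establish the latter I would exploit the $\mathbb{C}^{*}$-action $(E,\phi)\mapsto(E,t\phi)$, which preserves $\mathcal{N}$ and commutes with $\pi$. Each fibre $\pi^{-1}(E)\cap\mathcal{N}_i$ is $\mathbb{C}^{*}$-invariant and contains a non-zero point, hence has dimension at least one. For the matching upper bound, take a general $[E]\in W_i$ and a nilpotent $\phi\in\pi^{-1}(E)\cap\mathcal{N}_i$, use the Bia{\l}ynicki-Birula theorem to identify the limit $\lim_{t\to\infty}(E,t\phi)$ with a Hodge bundle $(E_\infty,\phi_\infty)$ in a fixed Hodge component $F_i\subset\mathcal{H}^{\mathbb{C}^{*}}$, and construct an explicit rational curve $\mathbb{P}^{1}\to\mathcal{M}_C(n,\delta)$ through $[E]$ via Hecke modifications along the grading filtration of $E_\infty$. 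Since $\mathcal{M}_C(n,\delta)$ is a smooth Fano variety, the deformation theory of such rational curves is well controlled, and the $\mathbb{C}^{*}$-orbit together with the sweep of these rational curves accounts for exactly one extra parameter over $W_i$, forcing $\dim W_i=\dim\mathcal{M}_C(n,\delta)-1$.

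The hard part will be this final step: ruling out the possibility that some component $\mathcal{N}_i$ has generic fibre dimension $\geq 2$, equivalently proving that the Hodge-bundle limit of a general nilpotent Higgs field on a wobbly $E$ genuinely produces a one-parameter, and not higher-dimensional, deformation of $E$ within $\mathcal{W}_C$. This reduces Drinfeld's conjecture to a concrete geometric statement about rational curves on the Fano variety $\mathcal{M}_C(n,\delta)$, which one may attack by a bend-and-break / deformation argument keyed to the Hodge-filtration structure attached to each irreducible component of $\mathcal{N}$.
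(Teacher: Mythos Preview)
Your framework via the nilpotent cone is sound in outline: the components $\mathcal{N}_i$ are $\mathbb{C}^*$-invariant (hence conical) Lagrangians in $T^*\mathcal{M}_C(n,\delta)$, and for \emph{conical} Lagrangians the conormal description holds (your statement is false for arbitrary Lagrangians---the graph of a nonzero closed $1$-form is a counterexample---but correct in the conical case, which is what you need). The reduction to proving that the generic fibre of $\pi|_{\mathcal{N}_i}$ is one-dimensional is therefore valid. But you explicitly concede that this is ``the hard part,'' and your sketch does not prove it. Asserting that the $\mathbb{C}^*$-orbit together with a sweep of rational curves ``accounts for exactly one extra parameter'' is precisely the statement to be established; a bend-and-break argument does not by itself bound the fibre dimension from above, and nothing in your proposal rules out a two-parameter family of nilpotent Higgs fields on a general wobbly $E$. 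You have reformulated the conjecture rather than advanced on it.

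The paper takes a different and more concrete route that supplies exactly the missing step. It first proves that $E$ is wobbly if and only if some rational curve through $E$ is nonfree (Theorem~\ref{IT2}), and then---crucially---constructs through any general wobbly $E$ an explicit rational curve $f$ for which $f^*T\mathcal{M}_C(n,\delta)$ has \emph{exactly one} negative summand. This ``exactly one'' is obtained by a direct cohomological computation (Remark~\ref{NR1}, Lemma~\ref{NL2}) using the kernel filtration of a nilpotent Higgs field together with a twisted Brill--Noether genericity argument, after a preliminary deformation (Proposition~\ref{NP1}) into the favourable numerical range. The corank-one property of the evaluation differential $dF$ then forces the image of the family of such curves to be a divisor (Proposition~\ref{NP2}). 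In your language, this computation is precisely what pins the generic fibre of $\pi|_{\mathcal{N}_i}$ to dimension one; the paper carries out the explicit work that your outline postpones.
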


The aim of this article is to give a proof of the above conjecture when the rank $n$ and degree $d$ are co-prime. The term ``wobbly'' was introduced in the paper \cite{DP}. The conjecture has been proved completely for rank two by the author of this article and jointly with C. Pauly in \cite{P} and \cite{PP}. Our approach in this article is different from \cite{P} and \cite{PP}.

The stable locus $\mathcal{M}^s_C(n, \delta) \subseteq \mathcal{M}_C(n, \delta)$ is smooth and is a Fano manifold of Picard number one \cite[Theorem B, Theorem F]{DN}. Hence, it can be covered by rational curves \cite[Ch V, Theorem 1.6.1]{Kol}. In particular, it is uniruled. In other words, there is a dominant rational map $\Phi: Y \times \mathbb{P}^1 \cdots \to \mathcal{M}_C^s(n, \delta)$, where $Y$ is a variety of dimension $\text{dim}(\mathcal{M}_C^s(n, \delta))-1$. Then by \cite[Corollary 2.13]{D}, a general rational curve in $\mathcal{M}^s_C(n, \delta)$ is free (for the definition of free rational curves, see Section \eqref{S2}).

 Our main idea to prove the conjecture is to study the nonfree rational curves in $\mathcal{M}_C^s(n, \delta)$. In fact, we show that a stable vector bundle on $C$ is wobbly if and only if there is a nonfree rational curve passing through it. Let $\text{Hom}^{\text{nonfree}}(\mathbb{P}^1, \mathcal{M}^s_C(n, \delta))$ be the scheme parametrizing non-free rational curves in $\mathcal{M}_C^s(n, \delta)$. Then it is enough to show that the subvariety spanned (see \ref{MAR31D}) by $\text{Hom}^{\text{nonfree}}(\mathbb{P}^1, \mathcal{M}^s_C(n, \delta))$ is pure of co-dimension one. For this, we show that every irreducible component of $\text{Hom}^{\text{nonfree}}(\mathbb{P}^1, \mathcal{M}^s_C(n, \delta))$ contains a rational curve $f$ such that $f^*T(\mathcal{M}_C^s(n, \delta)$ has exactly one direct summand of negative degree ( in fact, of degree $-1)$. Then, one can easily show that the subvariety spanned by such a component is irreducible of co-dimension one.   

{\bf Organization of the paper}: In Section \ref{S2}, we recall some basic facts that we need in the subsequent sections. In Section \ref{S3}, we give a necessary and sufficient criterion for a stable vector bundle being wobbly. More precisely, we prove the following theorem:

 \begin{theorem}\label{IT2}
A  vector bundle $E \in \mathcal{M}_C^s(n, \delta)$  is wobbly if and only if there is a nonfree rational curve passing through it provided $E$ satisfied the condition $($\ref{assumption}$)$ in Section \ref{S3}.
\end{theorem}

 In Section \ref{S4}, we will study the subvariety spanned by the nonfree rational curves in $\mathcal{M}_C(n, \delta)$. In fact, we will show that this subvariety forms a divisor. Finally, using Theorem \eqref{IT2}
we, prove the following Theorem:
\begin{theorem}\label{T31}
If $(n, d)=1$, then $\mathcal{W}_C$ is of pure codimension one.
\end{theorem}
This settles the Conjecture \eqref{NC1} in the fine moduli case. Our method also works for the coarse moduli case with some mild hypothesis, which may be known to be true to experts (see remark \ref{R31}).

\section{Preliminaries}\label{S2}

In this section we recall some basic definitions and facts about Fano manifolds which can be found in  \cite{Kol}. Let $X$ be a Fano manifold of dimension $n$. A parametrized rational curve in $X$ is a morphism $\mathbb{P}^1 \to X$ which is birational over its image. We will not distinguish parametrized rational curves from its image $f(\mathbb{P}^1)$ and we call $f$, a rational curve in $X$. Recall that any vector bundle on $\mathbb{P}^1$ is a direct sum of line bundles. Thus, given a rational curve $f: \mathbb{P}^1 \to X$, the pull back of the tangent bundle $TX$ can be written as 
\begin{equation}\label{e1}
f^*TX = \mathcal{O}_{\mathbb{P}^1}(a_1) \oplus \mathcal{O}_{\mathbb{P}^1}(a_2) \oplus  \cdots \oplus \mathcal{O}_{\mathbb{P}^1}(a_n) \,\, \text{with} \,\, a_1 \ge a_2\ge \cdots \ge a_n.
\end{equation}
The rational curve $f$ is said to be {\it free} if all the integers $a_1, a_2, \cdots, a_n$ in the above equation are non-negative. We say a rational curve is {\it nonfree} if it is not free.

From now on, we will assume that $X$ is a Fano manifold of dimension $n$ with Picard number $1$. It is known that through every point of $X$, there is a rational curve in $X$. The degree of $f^*K_X^{-1}$ is called the {\it anti-canonical degree} of the curve $f: \mathbb{P}^1 \to X$.

Following the conventions of \cite{Kol}, let $\text{Hom}(\mathbb{P}^1, X)$ be the quasi projective scheme parametrizing the morphisms $\mathbb{P}^1 \to X$ \cite[Theorem 1.10]{Kol}. There is a morphism 
\begin{equation}\label{MAR31}
F: \mathbb{P}^1 \times \text{Hom}(\mathbb{P}^1, X) \to X
\end{equation}
called the {\it universal morphism} or the {\it evaluation map} \cite[P.114 3.3]{Kol}.

\begin{definition}\label{MAR31D}
We say a subvariety $Z \subset X$ is {\it spanned} by a closed subset $\mathcal{K} \subset \text{Hom}(\mathbb{P}^1, X)$ if $\overline{F(\mathbb{P}^1 \times \mathcal{K})} = Z$. 
 \end{definition}

 Let $f: \mathbb{P}^1 \to X$ be a rational curve and let 
\begin{equation}\label{PE1}
\phi(p, f): H^0(\mathbb{P}^1, f^*TX) \to T_f(p)X
\end{equation}
be the natural evaluation map. \\
Let 
\[
F: \mathbb{P}^1 \times \text{Hom}(\mathbb{P}^1, X) \to X
\]
be the evaluation map . Then we have  
\begin{proposition}\label{PP1}
   $dF(p, f)= df(p) + \phi(p, f)$  
\end{proposition}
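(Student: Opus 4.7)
The plan is to compute the differential of the evaluation map $F$ on each factor of the product and then use linearity (of differentials on a direct product) to combine them. The key observation is that the tangent space decomposes as
\[
T_{(p,f)}\bigl(\mathbb{P}^1 \times \mathrm{Hom}(\mathbb{P}^1, X)\bigr) \;=\; T_p\mathbb{P}^1 \,\oplus\, T_f\mathrm{Hom}(\mathbb{P}^1,X),
\]
and by the standard deformation-theoretic description of Hom schemes, $T_f\mathrm{Hom}(\mathbb{P}^1,X) = H^0(\mathbb{P}^1, f^*TX)$. Since $dF(p,f)$ is a $k$-linear map on a direct sum, it is determined by its restriction to each summand.

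First I would fix the morphism $f$ and let $p$ vary; along this slice, $F$ restricts to $f$ itself, so the partial differential in the $\mathbb{P}^1$-direction is exactly $df(p)\colon T_p\mathbb{P}^1 \to T_{f(p)}X$. Next I would fix $p$ and let $f$ vary: a tangent vector $\sigma \in H^0(\mathbb{P}^1, f^*TX)$ corresponds, via the functor of points, to a morphism $\tilde{f}\colon \mathbb{P}^1 \times \mathrm{Spec}\, k[\epsilon]/(\epsilon^2) \to X$ reducing to $f$ modulo $\epsilon$. Evaluating $\tilde{f}$ at the fixed point $p$ gives a tangent vector at $f(p)$, and tracing through the identifications this is precisely $\sigma(p) = \phi(p,f)(\sigma)$. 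Adding the two partial differentials yields the claimed formula.

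The only step requiring genuine care is the identification of a first-order deformation of $f$ with its associated section of $f^*TX$ and the verification that the evaluation of this deformation at $p$ agrees, under the tangent-bundle identification, with $\sigma(p)$. This is a standard compatibility, but it is the conceptual content behind an otherwise purely formal manipulation; once it is stated cleanly, the proposition is immediate.
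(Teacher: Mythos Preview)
Your argument is correct and is exactly the standard proof; the paper itself does not give an argument but simply refers the reader to \cite[Ch.~II, Proposition~3.4]{Kol}, whose proof is precisely the product decomposition and deformation-theoretic identification you outline. There is nothing to add.
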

\begin{proof}
 See \cite[Ch. II, Proposition 3.4]{Kol}.    
\end{proof}
\begin{lemma}[Izzet Coskun and Eric Riedl]\label{July7}
Let $V$ be an irreducible component of the closed subset of $\text{Hom}(\mathbb{P}^1, X)$ consisting non-free rational curves. Further assume that  for a general element $f \in V$, $f^*TX= \oplus_{i=1}^{n-1}\mathcal{O}(a_i) \oplus \mathcal{O}(-1)$, where $a_i \ge 0$ for all $i$. Then 
$\overline{F(\mathbb{P}^1 \times V)}$ has co-dimension one. 
\end{lemma}
\begin{proof}
    The proof of the above Lemma suggested by Izzet Coskun and Eric Riedl in a private communication. I include it here. 

   Let $Z'$ be the image of the evaluation morphism and let $Z$ be a desingularization. A general map in $V$, will lift to $Z$.\\
  Consider a very general $f: \mathbb{P}^1 \to Z \to X$  in $V$; we denote the first map by $g$ and the second map ny $h$. Now consider the normal bundle exact sequence: 
  \[
  0 \to g^* TZ  \to f^* TX \to f^* N_h \to 0,
\]
where $N_h$ denotes the normal bundle.
   
  By assumption, $H^1(f^* TX) =0$, so $f$ is a smooth point of  $\text{Hom}(\mathbb{P}^1, X)$. Since $g$ passes through a very general point of $Z$, the morphism $g$ is free. Hence, $g$ is a smooth point of $\text{Hom}(\mathbb{P}^1, Z)$. Since the deformations of $f$ lie in $Z$ and cover $Z$, we have $\text{Hom}(\mathbb{P}^1, Z) =  \text{Home} (\mathbb{P}^1, X)$ around $f$ and $g$. Hence, the tangent spaces have to be equal, so we have that $H^0(g^* TZ) = H^0(f^* TX)$. By the classification of bundles on $P^1$, we must have 
\[
f^*TX= \oplus_{i=1}^{n-1}\mathcal{O}(a_i) \oplus \mathcal{O}(-1)
\]
and 
\[
g^* TZ = \oplus_{i=1}^{n-1}\mathcal{O}(a_i)
\]
and $N_h = O(-1)$,
so that $Z$ is of codimension 1. 
This is because all the sumands in $g^* TZ$ have to be nonnegative and less than or equal to the summands in $f^* TX$. \\
On the other hand, if any of the summands were actually smaller, then the their global sections could not be equal. Hence, $g^* TZ$ must consist of all the nonnegative summands of $f^*TX$.
Hence, the dimension of $Z$ must be $n-1$.

\end{proof}

\subsection{The Hitchin map}\label{SS1}

Let $\mathcal{M}_H(n, \delta)$ be the moduli space of Higgs bundles of rank $n$ with fixed determinant $\delta$ on $C$. It is known that the cotangent bundle $T^*\mathcal{M}_C^s(n, \delta) \subset \mathcal{M}_H(n, \delta)$ as an open dense subset. Given a Higgs bundle $(E, \varphi)$, where 
$\varphi :E \to E \otimes K_C$ is a trace free homomorphism of vector
bundles, one can define  its characteristic coefficients $a_i \in \Gamma(X,K_C^i)$
for $2 \leq i \leq n$ by setting $a_i={(-1)}^i\text{Trace}
\wedge^i\varphi$, which induces a morphism

\begin{equation}\label{RE98}
h: \mathcal{M}_H(n, \delta) \to W:= \oplus_{i=2}^nH^0(C, K_C^i),
\end{equation}
such that $h(E, \varphi)= (a_2, a_3, ..., a_n)$. This morphism is known as the Hitchin map. We also denote the restriction of the Hitchin map $h$ to $T^*\mathcal{M}_C^s(n, \delta)$ by $h$.  Let $N$ be the dimension of $\mathcal{M}_C(n, \delta)$, then the dimension of $W$ is also $N$.  It is known that, $h: T^*\mathcal{M}_C(n, \delta) \to W$ is an algebraically completely integrable Hamiltonian system \cite[Proposition 4.1. 4.4, P-99-5.1]{NH}. Hence, every component of each fiber of the Hitchin map is a Lagrangian, in particular it has dimension $N$.

\section{Codimension of the wobbly locus}\label{S2.2}
In this section we will give a short proof that the wobbly locus has co-dimension one. However, we can not conclude that the wobbly locus is pure of co-dimension one.

By algebraically completely integrablity, of $T^*\mathcal{M}_C^s(n, \delta)$, every component of $h^{-1}(0)$ is Lagrangian and hence has dimension $N$. Let $X_1, X_2, ..., X_m$ be its components, with $X_1$ be the component corresponding to the zero section of  $T^*\mathcal{M}_C^s(n, \delta)$. There is a forgetful map 
 \[
 F: X_1 \cup X_2 \cup...\cup X_m \to \mathcal{M}_C^s(n, \delta).
 \]
 Let $\mathcal{W}_i$ be the image of $X_i, i= 2, ..., m$. Then the wobbly locus is $\cup_{i=2}^m\mathcal{W}_i$.
 \begin{lemma}\label{Mar28}
  The locus of wobbly bundles is non-empty.
  \end{lemma}
  \begin{proof}
  To see this, let us consider two general stable vector bundles $E_1, E_2$ of rank $r_1, r_2$ and degree $d_1, d_2$ respectively, with $\text{det}(E_1) \otimes \text{det}(E_2) \simeq \delta$ and $r_1 + r_2=n$, such that:\\
 (A) $\mu(E_1) < \mu(E_2)$,\\
 (B) $r_2(r_2-r_1)(g-1) +r_2(r_1(g-1)+d_1)-r_1d_2 > 0$.\\
 Then condition (A) ensures that a general extension 
 \[
 0 \to E_1 \to E \overset{f}{\to}  E_2 \to 0
 \]
is stable \cite[Lemma 2.3]{TB}. On the other hand, condition (B) ensures that $h^0(E_2^* \otimes E_1 \otimes K_C) \ne 0$, \cite[Theorem 0.3]{RTLC}. Let $s$ be a non-zero section of $E_2^* \otimes E_1 \otimes K_C$. Then $\varphi:= s \circ f$ defines a non-zero Higgs field and by definition $\varphi^2= 0$, which proves that $E$ is wobbly.
 \end{proof}
 \begin{theorem}
The locus of wobbly bundles is of co-dimension one.
 \end{theorem}
 \begin{proof}
 Note that $X_1$ consists the elements of the form $(E, 0)$, where $0$ stands for the zero Higgs field. Thus the restriction of the forgetful map to $X_1$ is injective.
 On the other hand, the Hitchin map $h$ is $\mathbb{C}^*$-equivariant under the natural $\mathbb{C}^*$-action on $T^*\mathcal{M}_C^s(n, \delta)$. Thus, if $(p, \varphi)\in X_i$, then $(p, \lambda \varphi) \in X_i$ for any non-zero scalar $\lambda$. Since $X_i$ is closed, $(p, 0) \in X_i$. 
Thus, the image of $X_i$ under the forgetful map is the same as the image of $X_1 \cap X_i$. In other words, the image of $X_1 \cap X_i$ under the forgetful map is precisely $\mathcal{W}_i$.
 We need to show that $\mathcal{W}_i$ is of dimension $N-1$ for some $i$. \\
 Note that $h^{-1}(0)$ is a complete intersection, in particular locally complete intersection, Thus, by Hartshorne's connectedness theorem \cite[Theorem 3.4]{HC} one can rearrange the components if needed such that $X_i \cap X_{i+1}$ has codimension one in $X_i$. Thus, we have $X_1 \cap X_2$ is of co-dimension one in $X_1$. Since the restriction of the forgetful map to $X_1$ is injective, $\mathcal{W}_2$ is of co-dimension one.
 \end{proof}

\section{The Wobbly locus on the moduli  of vector bundles on curves}\label{S3}

Let $C$ be a smooth projective curve of genus $g \ge 2$ over the complex numbers.  Let $\mathcal{M}_C(n, \delta)$ be the moduli space of semi-stable vector bundles on $C$  of rank $r$  with fixed determinant $\delta$ of degree $d$. Let $h = \gcd (r, d)$.

\begin{definition}
(1) A {\it Higgs field} $\varphi$ on a vector bundle $E$ is a morphism $\varphi: E \to E \otimes K_C$. A Higgs field $\varphi$ is called {\it nilpotent} if the composition
$$
\varphi^m: E \xlongrightarrow{\varphi} E \otimes K_C \xlongrightarrow{\varphi \otimes \Id} E \otimes K_C^2 \xrightarrow{\makebox[1.0 cm]{$\varphi \otimes \Id^{2}$}} \cdots \xrightarrow{\makebox[1.5 cm]{$\varphi \otimes \Id^{m-1}$}} E \otimes K_C^m
$$
is zero for some $m \ge 1$.

(2) A vector bundle $E$  is called {\it very stable} if $E$ does not admit any nonzero nilpotent Higgs field. A vector bundle is called {\it wobbly}  if it is not very stable.
\end{definition}

Let $\varphi: E \to E \otimes K_C$ is a non-zero Higgs field and set $E^{i-1}:= \text{ker}(\varphi^i:E \to E \otimes K_C^i)$. Then we have a filtration
\begin{equation}\label{Mar31}
0=E^{-1} \subset E^0 \subset E^1 \subset E^2 \subset \cdots\subset E.
\end{equation}
Furthermore, if $E_{i+1}:= E^{i}/E^{i-1}$, then $\varphi$ induces non-zero injective homomorphisms $E_{i+1} \to E_i \otimes K_C$ for $i \ge 1$ \cite[P. 650 (1.4), (1.5)]{L}. Note that if $\varphi$ is nilpotent, then $\varphi^{m+1}=0$ for some $m \ge 1$. In other words, $E= E^m$ and there is an injection $E_{m+1} \to E_m \otimes K_C.$ Therefore, if $E$ is wobbly then, $E$ can be obtained as successive extensions of the form
 \begin{equation}\label{NL1}
\begin{tikzpicture}[baseline=(current  bounding  box.center)]
  \matrix(m)[matrix of math nodes,row sep=0.5em, column sep=1.5em,text height=1.5ex, 
 text depth=0.25ex]
 { 0 & E_1 = E^0 & E^1 & E_2 & 0 \\
 0 & E^1 & E^2 & E_3 & 0\\
 & \vdots & \vdots & \vdots & \\
 0 & E^{m-1} & E=E^m & E_{m+1} & 0\\};
 \path[->](m-1-1) edge (m-1-2);
 \path[->](m-1-2) edge (m-1-3);
 \path[->](m-1-3) edge (m-1-4);
 \path[->](m-1-4) edge (m-1-5);
 \path[->](m-2-1) edge (m-2-2);
 \path[->](m-2-2) edge (m-2-3);
 \path[->](m-2-3) edge (m-2-4);
 \path[->](m-2-4) edge (m-2-5);
 \path[->](m-4-1) edge (m-4-2);
 \path[->](m-4-2) edge (m-4-3);
 \path[->](m-4-3) edge (m-4-4);
 \path[->](m-4-4) edge (m-4-5);
 \end{tikzpicture}
\end{equation}
such that $H^0(C, E_{i+1}^* \otimes E_i  \otimes K_C) \ne 0$ for $i \ge 1$.


\begin{remark}\label{NR1}

Let $E$ be wobbly and let $m$ be the minimal integer such that $E$ admits a Higgs field $\varphi$ with $\varphi^{m+1}= 0$. Further, assume that $m \ge 2$.
\begin{enumerate}[(i)]
\item Suppose there is a non-zero homomorphism $E_{m+1} \to E^{m-i} \otimes K_C$ for some $i \ge 1$. Then it will induce a non-zero Higgs field $\psi: E \to E_{m+1} \to E^{m-i} \otimes K_C \to E \otimes K_C$ such that $\psi^k = 0$, for some $k \leq m$. Thus, by the minimality of $m$, no such non-zero homomorphism $E_{m+1} \to E^{m-i} \otimes K_C$ exits. Hence we have $H^0(E_{m+1}^* \otimes E^{m-i} \otimes K_C)= 0$ for $i \ge 1$.

\item Similarly, if there is a non-zero homomorphism $E_{j+1} \to E_{j-i} \otimes K_C, i \ge 1, j\ge 2$, then one can get a  Higgs field $\psi$ with $\psi^{m-i+1}= 0$, a contradiction. Thus, we may also assume that $H^0(E_{j+i}^* \otimes E_{j-i} \otimes K_C) =0$ for $i \ge 1$ and $j \ge 2$.

\item Let $E_{j+1} \to E_j$ be a non-zero homomorphism. Since $E_{j} \to E_{j-1} \otimes K_C$ injective, we get a non-zero homomorphism $E_{j+1} \to E_{j-1} \otimes K_C$, contradicting the above. Hence, no such non-zero homomorphism $E_{j+1} \longrightarrow E_j$ exists.

\item 
Note that $E_i \subset E^{i-1} \otimes K_C^{i-1}$ and  $\varphi$ induces a map $E \otimes K_C^{m-1} \to E_{m+1} \subset E \otimes K_C^m$. On the other hand, any morphism $\psi: E_{m+1} \to E_m \otimes K_C$ gives a map $\psi: E_{m+1} \to E^{m-1} \otimes K^m \subset E \otimes K_C^m$. Hence the composition $E \otimes K_C^{m-1} \to E_{m+1} \to E \otimes K_C^m$ gives a non-zero nilpotent Higgs field inducing the same filtration as of $\varphi$ on $E$. Thus the non-zero Higgs fields of $E$ inducing the filtration (\ref{Mar31}) on $E$, is same as $H^0(E_{m+1}^* \otimes E_m \otimes K_C)$.

\end{enumerate}

\end{remark}

\subsection{Rational curves through E} Let $E \in \mathcal{M}_C(n, \delta)$ be a stable wobbly bundle. Then $E$ admits a non-zero nilpotent Higgs field 
 \begin{equation}\label{Mar27}
 \varphi: E \to E \otimes K_C
 \end{equation}
and $\varphi$ induces a filtration on $E$ as in (\ref{Mar31}). Let $m$ be the smallest integer such that $\varphi^{m+1}= 0$. Then, $E$ can be obtained by successive extensions, as in (\eqref{NL1}). Here we will briefly recall the construction done in \cite[Lemma 3.6]{TB} of a rational curve in $\mathcal{M}_C(n, \delta)$ through $E$.

Let $\mathcal{P}_1= \mathbb{P}({\text{Ext}}^1(E_2, E_1))$. Then there is a canonical extension on $C \times \mathbb{P}^1 \times \mathcal{P}_1$:
 \begin{equation}\label{NE1}
 0 \longrightarrow p_1^*E_1 \otimes p_2^*{\mathcal{O}(1)} \longrightarrow \mathcal{E}_1^{\prime} \longrightarrow p_1^*E_2 \longrightarrow 0,
 \end{equation}
where $p_i$ is the $i$-th coordinate projection. Now consider the projective bundle  $\mathcal{P}_2= \mathbb{P}(R^1{p_3}_*\mathcal{H}\text{om}(p_1^*E_3, \mathcal{E}_1^{\prime} \otimes p_2^*\mathcal{O}(1))$ over $\mathcal{P}_1$.  Inductively, we construct a projective bundle $\mathcal{P}_j = \mathbb{P}(R^1{p_3}_*\mathcal{H}\text{om}(p_1^*E_{j+1}, \mathcal{E}_{j-1}^{\prime} \otimes p_2^*\mathcal{O}(1))$ over $\mathcal{P}_{j-1}, 2 \leq j \leq m$ together with a canonical extension on $C \times \mathbb{P}^1 \times \mathcal{P}_j$ as follows:
Let $\pi_{j-1}: \mathcal{P}_j \to \mathcal{P}_{j-1}$ be the natural projection map. We denote the morphism,  $\text{Id}_{C \times \mathbb{P}^1} \times \pi_{j-1}:  C \times \mathbb{P}^1 \times \mathcal{P}_j \to C \times \mathbb{P}^1 \times \mathcal{P}_{j-1}$, by $\Phi_{j-1}$. Then we have the canonical extension on $C \times \mathbb{P}^1 \times \mathcal{P}_j$
 \begin{equation}\label{NE2}
0 \longrightarrow \Phi_{j-1}^*(\mathcal{E}_{j-1}^{\prime}) \otimes p_2^*\mathcal{O}(1) \longrightarrow \mathcal{E}_j^{\prime} \longrightarrow p_1^*E_{j+1} \longrightarrow 0.
\end{equation}
The restriction of $\mathcal{E}_{m}^{\prime}$ to the fiber over a point $p \in \mathcal{P}_m$ gives a vector bundle on $C \times \mathbb{P}^1 \times \{p\}$.  Let $ x_m \in \mathcal{P}_m$ be such that the corresponding rational curve passes through $E$.   Since, $E$ is stable, a general point in the rational curve is stable, which gives a rational map: 
\begin{equation}
f: \mathbb{P}^1 \cdots \to \mathcal{M}_C(n, \delta).
\end{equation}
Since the moduli space of equivalence classes of semistable vector bundles is complete, the above map extends to a morphism, which gives a rational curve
\begin{equation}\label{NE3}
f: \mathbb{P}^1 \longrightarrow \mathcal{M}_C(n, \delta).
\end{equation}
Let $x_j \in \mathcal{P}_j$ be such that the rational curve corresponding to $x_j$ passes through $E^j$.We denote the restriction of $\mathcal{E}_j^{\prime}$ to $C \times \mathbb{P}^1 \times \{x_j\}$ by $\mathcal{E}_j$. So we got two exact sequences of vector bundles on $C \times \mathbb{P}^1$:
\begin{equation}\label{NE11}
 0 \longrightarrow p_1^*E_1 \otimes p_2^*{\mathcal{O}(1)} \longrightarrow \mathcal{E}_1 \longrightarrow p_1^*E_2 \longrightarrow 0,
\end{equation}
and
 \begin{equation}\label{NE21}
0 \longrightarrow \mathcal{E}_{j-1} \otimes p_2^*\mathcal{O}(1) \longrightarrow \mathcal{E}_j \longrightarrow p_1^*E_{j+1} \longrightarrow 0.
\end{equation}

Let $r_i, d_i$ be the rank and degree of $E_i$. Then the anticanonical degree of the rational curve is $ q:=2\sum_{i < j}(r_id_j-r_jd_i)(j-i)$ \cite[Lemma 3.6]{TB}.

We compute several cohomological relations of bundles occurring in \eqref{NE11} and \eqref{NE21} in the following Lemma, which we need in the next Proposition. Throughout the proof, we will use K\"{u}nneth formula to compute the cohomology on the product: If $X$ and $Y$ are two projective varieties and $E, F$ are quasi-coherent sheaves on $X$ and $Y$, respectively, then $H^i(X \times Y, p_1^*E \otimes p_2^*F)= \oplus_{p+q=i}H^p(X, E) \otimes H^q(Y, F)$ to compute the cohomogies $H^i(X \times Y, p_1^*E \otimes p_2^*F)$ where $p_1, p_2$ are the projection of $X \times Y$ to the first and second factors respectively.

\begin{lemma}\label{nnl1}
Consider the  exact sequences of vector bundles on $C \times \mathbb{P}^1$, in \eqref{NE11} and \eqref{NE21}. Let $F$ be a vector bundles on $C$ and $k \geq 0$ be an integer. Then
\begin{enumerate}[$($a$)$]
\item $H^0(\mathcal{E}_j^* \otimes p_1^*F \otimes p_2^*\mathcal{O}(-1))= H^0(\mathcal{E}_j \otimes p_1^*F \otimes p_2^*\mathcal{O}(-j-1))= 0$ and $H^2(\mathcal{E}_j \otimes p_1^*F \otimes p_2^*\mathcal{O}(k))= 0$.

\vspace{0.1 cm}

\item we have 
\begin{align*}
H^2(\mathcal{E}_j^* \otimes p_1^*F \otimes p_2^*\mathcal{O}(-1)) &= H^2(\mathcal{E}_{j-1}^* \otimes p_1^*F \otimes p_2^*\mathcal{O}(-2))\\
H^1(\mathcal{E}_j^* \otimes p_1^*F \otimes p_2^*\mathcal{O}(-1)) &= H^1(\mathcal{E}_{j-1}^* \otimes p_1^*F \otimes p_2^*\mathcal{O}(-2))
\end{align*}
Further more, if $j \ge 2$, then
$$
H^2(\mathcal{E}_j^* \otimes p_1^*E_{j+1} \otimes p_2^*\mathcal{O}(-1))=H^2(\mathcal{E}_{j-1}^* \otimes p_1^*E_{j+1} \otimes p_2^*\mathcal{O}(-2))=0.
$$

\vspace{0.1 cm}

\item $H^2(\mathcal{E}_{j-1} \otimes \mathcal{E}_j^*)=  H^2(\text{Ad}(\mathcal{E}_{j-1}) \otimes p_2^*\mathcal{O}(-1))$.

\vspace{0.1 cm}

\item if $j \ge 2,$ we have $h^2(\text{Ad}(\mathcal{E}_j) \otimes p_2^*\mathcal{O}(-1)) =  
   h^0(C, E_j \otimes E_{j+1}^* \otimes K_C).$

\vspace{0.1 cm}

\item for $j \ge 2$, we have 
\begin{align*}
H^1(\mathcal{E}_{j-1} \otimes p_1^*E_{j+1}^* \otimes p_2^*\mathcal{O}(k)) = (H^1(&C, E_j \otimes E_{j+1}^*) \otimes H^0(\mathbb{P}^1, \mathcal{O}(k)))\\
&\oplus H^1(\mathcal{E}_{j-2} \otimes p_1^*E_{j+1}^* \otimes p_2^*\mathcal{O}(k+1)).
\end{align*}
In particular,
$$
h^1(\mathcal{E}_{j-1} \otimes p_1^*E_{j+1}^*) =  \sum_{k=0}^{j-1} (h^1(C, E_{j-k} \otimes E_{j+1}^*) \cdot h^0(\mathbb{P}^1, \mathcal{O}(k))).
$$

\vspace{0.1 cm}

\item if $j \ge 2$, we have 
\begin{align*}
h^1(\mathcal{E}_{j-1}^* \otimes p_1^*E_{j+1} \otimes p_2^*\mathcal{O}(-2)) &= - h^1(C, E_j^* \otimes E_{j+1})\\
\sum_{k=0}^{j-1}h^0(C, E_{j-k}^* &\otimes E_{j+1}) \cdot h^1(\mathbb{P}^1, \mathcal{O}(-2-k)).
\end{align*}

\vspace{0.1 cm}

\item $H^0(\text{Ad}(\mathcal{E}_j) \otimes p_2^*\mathcal{O}(-1))= 0$.
\end{enumerate}
\end{lemma}
\begin{proof}
$(a)$ We will use induction on $j \geq 1$. Let $j=1$. Taking the dual exact sequence of \eqref{NE11}  and tensoring by $p_1^*F \otimes p_2^*\mathcal{O}(-1)$, we have 
\[
0 \to p_1^*E_2^* \otimes  p_1^*F \otimes p_2^*\mathcal{O}(-1) \to \mathcal{E}_1^* \otimes p_1^*F \otimes p_2^*\mathcal{O}(-1) \to p_1^*E_1^* \otimes
p_1^*F \otimes p_2^*\mathcal{O}(-2) \to 0.
\]
Considering the cohomology sequence we get
\begin{align*}
0 \to H^0(p_1^*E_2^* \otimes  p_1^*F \otimes p_2^*\mathcal{O}(-1)) &\to H^0(\mathcal{E}_1^* \otimes p_1^*F \otimes p_2^*\mathcal{O}(-1))\\
&\to H^0(p_1^*E_1^* \otimes p_1^*F \otimes p_2^*\mathcal{O}(-2)).
\end{align*}
We have, $H^0(p_1^*E_2^* \otimes  p_1^*F \otimes p_2^*\mathcal{O}(-1))=H^0(p_1^*E_1^* \otimes
p_1^*F \otimes p_2^*\mathcal{O}(-2))=0$ by K\"{u}nneth formula. Thus
\begin{equation}\label{ne10}
H^0(\mathcal{E}_1^* \otimes p_1^*F \otimes  p_2^*\mathcal{O}(-1))= 0.
\end{equation}
Similarly, tensoring \eqref{NE11} by $p_1^*F \otimes p_2^*\mathcal{O}(k)$ we have,
\[
0 \to p_1^*E_1 \otimes  p_1^*F \otimes p_2^*\mathcal{O}(k+1) \to \mathcal{E}_1 \otimes p_1^*F \otimes p_2^*\mathcal{O}(k)  \to p_1^*E_2 \otimes p_1^*F \otimes p_2^*\mathcal{O}(k)\to 0.
\]
Note that, for any integer $k \geq 0$
\begin{align*}
H^2(p_1^*E_1 \otimes p_1^*F \otimes p_2^*\mathcal{O}(k+1)) = 0 = H^2(p_1^*E_2 \otimes
p_1^*F \otimes p_2^*\mathcal{O}(k))
\end{align*}
and for any integer $k \leq -2$.
\begin{align*}
H^0(p_1^*E_1 \otimes p_1^*F \otimes p_2^*\mathcal{O}(k+1)) = 0 = H^0(p_1^*E_2 \otimes p_1^*F \otimes p_2^*\mathcal{O}(k))
\end{align*}
Thus $H^0(\mathcal{E}_1 \otimes p_1^*F \otimes p_2^*\mathcal{O}(k)) = 0$, for $k \leq -2$ and $H^2(\mathcal{E}_1 \otimes p_1^*F \otimes p_2^*\mathcal{O}(k)) = 0$, for $k \geq 0$. This shows that $(a)$ holds for $j = 1$.

Tensoring  by $p_1^*F \otimes p_2^*\mathcal{O}(-1)$, the dual exact sequence of \eqref{NE21}, we have,
\begin{align*}
0 \to p_1^*E_{j+1}^* \otimes p_1^*F \otimes p_2^*\mathcal{O}(-1) &\to \mathcal{E}_j^* \otimes p_1^*F \otimes p_2^*\mathcal{O}(-1)\\
&\to \mathcal{E}_{j-1}^* \otimes  p_1^*F \otimes p_2^*\mathcal{O}(-2) \to 0.
\end{align*}
By induction hypothesis, $H^0(\mathcal{E}_{j-1}^* \otimes p_1^*F  \otimes p_2^*\mathcal{O}(-1))= 0$ so that we get $H^0(\mathcal{E}_{j-1}^* \otimes p_1^*F \otimes p_2^*\mathcal{O}(-2))= 0$. Since $H^0(p_1^*E_{j+1}^* \otimes p_1^*F \otimes p_2^*\mathcal{O}(-1)) = 0$ by K\"{u}nneth formula, we get $H^0(\mathcal{E}_j^* \otimes p_2^*\mathcal{O}(-1) \otimes p_1^*F)=0$. Tensoring the exact sequence \eqref{NE21} by $p_1^*F \otimes p_2^*\mathcal{O}(k)$, where $k \in \mathbb{Z}$, we have
\[
0 \to \mathcal{E}_{j-1} \otimes  p_1^*F \otimes  p_2^*\mathcal{O}(k+1) \to \mathcal{E}_j \otimes p_1^*F \otimes p_2^*\mathcal{O}(k) \to p_1^*E_{j+1} \otimes p_1^*F \otimes p_2^*\mathcal{O}(k) \to 0.
\]
Since for any $k \geq 0$,
$$
H^2(p_1^*E_{j+1} \otimes p_1^*F \otimes p_2^*\mathcal{O}(k))= 0 = H^2(\mathcal{E}_{j-1} \otimes p_1^*F \otimes p_2^*\mathcal{O}(k+1)),
$$
by induction hypothesis, we have $H^2(\mathcal{E}_j \otimes p_1^*F \otimes p_2^*\mathcal{O}(k))= 0$, for any $k \geq 0$. On the other hand, $H^0(p_1^*E_{j+1} \otimes p_1^*F \otimes p_2^*\mathcal{O}(k))= 0$ if $k < 0$ and $H^0(\mathcal{E}_{j-1} \otimes p_1^*F \otimes p_2^*\mathcal{O}(-j))= 0$, by induction hypothesis. Combining, we get $H^0(\mathcal{E}_j \otimes p_1^*F \otimes p_2^*\mathcal{O}(-j-1))= 0$.

\vspace{0.2 cm}

$(b)$ Tensoring the dual exact sequence of \eqref{NE21} by $p_1^*F \otimes p_2^*\mathcal{O}(-1)$, we have
\begin{align*}
0 \to p_1^*(E_{j+1}^*\otimes F) \otimes p_2^*\mathcal{O}(-1) &\to \mathcal{E}_j^* \otimes p_1^*F \otimes p_2^*\mathcal{O}(-1)\\
&\to \mathcal{E}_{j-1}^* \otimes p_1^*F \otimes p_2^*\mathcal{O}(-2) \to 0.
\end{align*}
Since $H^2(p_1^*(E_{j+1}^*\otimes F) \otimes p_2^*\mathcal{O}(-1)) = 0 = H^1(p_1^*(E_{j+1}^*\otimes F) \otimes p_2^*\mathcal{O}(-1))$ by K\"{u}nneth formula, considering the cohomology sequence we have the first part of $(b)$.

By Serre duality, we have 
$$
H^2(\mathcal{E}_j^* \otimes p_1^*E_{j+1} \otimes p_2^*\mathcal{O}(-1))= H^0(\mathcal{E}_j \otimes p_1^*(E_{j+1}^* \otimes K_C) \otimes p_2^*\mathcal{O}(-1))^{\vee}.
$$ 
Tensoring \eqref{NE21} by $p_1^*K_C \otimes p_2^*\mathcal{O}(-1)$, we get
\[
0 \to \mathcal{E}_{j-1} \otimes p_1^*K_C  \to \mathcal{E}_j \otimes p_1^*K_C \otimes p_2^*\mathcal{O}(-1) \to p_1^*E_{j+1}\otimes p_1^*K_C \otimes p_2^*\mathcal{O}(-1) \to 0. 
\]
Since $H^0(\text{End}(p_1^*E_{j+1}) \otimes p_1^*K_C \otimes p_2^*\mathcal{O}(-1))= 0$ by K\"{u}nneth formula, any non-zero homomorphism $p_1^*E_{j+1} \to \mathcal{E}_j \otimes p_1^*K_C \otimes p_2^*\mathcal{O}(-1) $ will factor through a non-zero homomorphism $p_1^*E_{j+1} \to \mathcal{E}_{j-1} \otimes p_1^*K_C$. If $j \ge 2$, then  as in remark \eqref{NR1}, a nonzero homomorphism  $p_1^*E_{j+1} \to \mathcal{E}_{j-1} \otimes p_1^*K_C$ will give a nilpotent  homomorphism $\tilde{\varphi}: \mathcal{E}_j \to \mathcal{E}_j \otimes p_1^*{K_C} \otimes p_2^*\mathcal{O}(-1)$ with ${\tilde{\varphi}}^2=0$, contradicting the minimality of $m$. Thus $H^2(\mathcal{E}_j^* \otimes p_1^*E_{j+1} \otimes p_2^*\mathcal{O}(-1))= 0$. By taking $F= E_{j+1}$
in the first part of (b) we get the last part.

\vspace{0.2 cm}

$(c)$ Tensoring the dual sequence of \eqref{NE21} by $\mathcal{E}_{j-1}$, we have
\[
0 \to \mathcal{E}_{j-1} \otimes p_1^*E_{j+1}^*  \to \mathcal{E}_{j-1} \otimes \mathcal{E}_j^* \to (\text{Ad}(\mathcal{E}_{j-1}) \oplus \mathcal{O}) \otimes p_2^*\mathcal{O}(-1)\to 0. 
\]
By $(a)$,  $H^2(\mathcal{E}_{j-1} \otimes p_1^*E_{i+1}^*)= 0$ and hence
$$
H^2(\mathcal{E}_{j-1} \otimes \mathcal{E}_j^*)= H^2(\text{Ad}(\mathcal{E}_{j-1}) \otimes p_2^*\mathcal{O}(-1)).
$$

\vspace{0.2 cm}

$(d)$ Since $H^2(\text{Ad}(\mathcal{E}_j) \otimes p_2^*\mathcal{O}(-1))^{\vee} = H^0(\text{Ad}(\mathcal{E}_j) \otimes p_1^*K_C \otimes p_2^*\mathcal{O}(-1))$, it is enough to show that
\[
h^0(\text{Ad}(\mathcal{E}_j) \otimes p_1^*K_C \otimes p_2^*\mathcal{O}(-1)) = h^0(C, E_{j} \otimes E_{j+1}^* \otimes K_C).
\]
First we claim that if $\psi \in H^0(\text{Ad}(\mathcal{E}_j) \otimes  p_1^*K_C \otimes p_2^*\mathcal{O}(-1))$, then $\psi$ is nilpotent. In other words,
\begin{align*}
\mathcal{E}_j \xrightarrow{\psi} \mathcal{E}_j \otimes  p_1^*K_C \otimes p_2^*\mathcal{O}(-1) &\xrightarrow{\psi \otimes \Id} \mathcal{E}_j \otimes  p_1^*K_C^2 \otimes p_2^*\mathcal{O}(-2) \longrightarrow\\
&\cdots \xrightarrow{\makebox[1.5 cm]{$\psi \otimes \Id^{\ell -1}$}} \mathcal{E}_j \otimes  p_1^*K_C^{\ell} \otimes p_2^*\mathcal{O}(-\ell)
\end{align*}
is zero for some $l \ge 1$. We will show that  $H^0((\text{Ad}(\mathcal{E}_j) \otimes p_1^*K_C^{\ell} \otimes p_2^*{\mathcal O}(-\ell)) = 0$ for some $\ell \in \mathbb{Z}$.

We will use induction on $j$. Let $j=1$. Tensoring (\ref{NE11}) by $\mathcal{E}_1^* \otimes p_1^*K_C^2 \otimes p_2^*\mathcal{O}(-2)$, we get
\begin{align*}
0 \to \mathcal{E}_1^* \otimes p_1^*(E_1 \otimes K_C^2) \otimes p_2^*\mathcal{O}(-1)  &\to \text{End}(\mathcal{E}_1) \otimes p_1^*{K_C^2} \otimes p_2^*\mathcal{O}(-2)\\
&\to \mathcal{E}_1^* \otimes p_1^*(E_2 \otimes K_C^2) \otimes p_2^*\mathcal{O}(-2)  \to 0.
\end{align*}
By $(a)$ we have,
$$
H^0(p_1^*(\mathcal{E}_1^* \otimes E_1 \otimes K_C^2) \otimes p_2^*\mathcal{O}(-1)) = 0 = H^0(\mathcal{E}_1^* \otimes p_1^*(E_2 \otimes K_C^2) \otimes p_2^*\mathcal{O}(-2)).
$$
Thus $H^0(\text{End}(\mathcal{E}_1) \otimes p_1^*{K_C^2} \otimes p_2^*\mathcal{O}(-2)) = 0$ and hence $H^0(\text{Ad}(\mathcal{E}_1) \otimes p_1^*{K_C^2} \otimes p_2^*\mathcal{O}(-2)) = 0$.

Let us now assume that $H^0(\text{Ad}(\mathcal{E}_{j-1}) \otimes p_1^*K_C^{\ell} \otimes p_2^*{\mathcal O}(-\ell)) = 0$ for some integer $\ell$. Tensoring (\ref{NE21}) by $\mathcal{E}_j^* \otimes p_1^*K^{m} \otimes p_2^*\mathcal{O}(-m)$, where $m > \text{max}\{l, j\}$, we get
\begin{align*}
0 \to \mathcal{E}_{j-1} \otimes \mathcal{E}_j^* \otimes p_1^*K_C^{m} &\otimes p_2^*\mathcal{O}(-m+1)  \to (\text{Ad}(\mathcal{E}_j) \oplus \mathcal{O}) \otimes p_1^*{K_C^{m}} \otimes p_2^*\mathcal{O}(-m)\\
&\to \mathcal{E}_j^* \otimes p_1^*(E_{j+1} \otimes K_C^{m}) \otimes p_2^*\mathcal{O}(-m) \to 0.
\end{align*}
By $(a)$, $H^0(\mathcal{E}_j^* \otimes p_1^*(E_{j+1} \otimes K_C^{m}) \otimes p_2^*\mathcal{O}(-m))= 0$. On the other hand, tensoring the dual exact sequence of (\ref{NE21}), by $\mathcal{E}_{j-1} \otimes p_1^*K_C^{m} \otimes p_2^*\mathcal{O}(-m+1)$, we get
\begin{align*}
0 \to \mathcal{E}_{j-1} \otimes p_1^*(E_{j+1}^* \otimes K_C^{m}) \otimes p_2^*&\mathcal{O}(-m+1)  \to \mathcal{E}_{j-1} \otimes \mathcal{E}_j^* \otimes p_1^*K_C^{m} \otimes p_2^*\mathcal{O}(-m+1)\\
&\to (\text{Ad}(\mathcal{E}_{j-1}) \oplus \mathcal{O}) \otimes p_1^*{K_C^{m}} \otimes p_2^*\mathcal{O}(-m)  \to 0.
\end{align*}
Since $m > j,$ by (a) and by induction hypothesis, we have
\[
H^0(\mathcal{E}_{j-1} \otimes p_1^*(E_{j+1}^* \otimes K_C^{m}) \otimes p_2^*\mathcal{O}(-m+1)) = 0 = H^0 \big( (\text{Ad}(\mathcal{E}_{j-1}) \oplus \mathcal{O}) \otimes p_1^*{K_C^{m}} \otimes p_2^*\mathcal{O}(-m) \big).
\]
Thus we have, 
\[
H^0(\text{Ad}(\mathcal{E}_{j})  \otimes p_1^*{K_C^{m}} \otimes p_2^*\mathcal{O}(-m))= 0. 
\]

Let $\psi \in H^0(\text{Ad}(\mathcal{E}_{j})  \otimes p_1^*K_C \otimes p_2^*\mathcal{O}(-1))$. Consider the diagram
 \begin{equation}\label{RE90}
\begin{tikzpicture}[baseline=(current  bounding  box.center)][font=\small]
  \matrix(m)[matrix of math nodes,row sep=2.5em, column sep=1.0em,text height=1.5ex, 
 text depth=0.5ex]
 { 0 & \mathcal{E}_{j-1} & \mathcal{E}_j \otimes p_2^*\mathcal{O}(-1) & p_1^*E_{j+1} \otimes p_2^*\mathcal{O}(-1) & 0 \\
 0 & \mathcal{E}_{j-1} \otimes p_1^*K_C \otimes p_2^*\mathcal{O}(-1) & \mathcal{E}_j \otimes p_1^*K_C \otimes p_2^*\mathcal{O}(-2) & p_1^*E_{j+1} \otimes p_1^*K_C\otimes p_2^*\mathcal{O}(-2) & 0.\\};
 \path[->](m-1-1) edge (m-1-2);
 \path[->](m-1-2) edge (m-1-3);
 \path[->](m-1-3) edge (m-1-4);
 \path[->](m-1-4) edge (m-1-5);
 \path[->](m-2-1) edge (m-2-2);
 \path[->](m-2-2) edge (m-2-3);
 \path[->](m-2-3) edge (m-2-4);
 \path[->](m-2-4) edge (m-2-5);
 \path[->](m-1-3) edge node[right]{${\psi}$} (m-2-3);
 \end{tikzpicture}
\end{equation}
By $(a)$, $H^0(\mathcal{E}_{j-1} \otimes p_1^*(E_{j+1} \otimes K_C) \otimes p_2^*\mathcal{O}(-2))= 0$ so the restriction of ${\psi}$ to $\mathcal{E}_{j-1}$ gives a nilpotent endomorphism $\psi_{j-1}$  on $\mathcal{E}_{j-1}$. Taking $j=1$ in \eqref{RE90}, we have
  \begin{equation}\label{RE89}
\begin{tikzpicture}[baseline=(current  bounding  box.center)][font=\small]
  \matrix(m)[matrix of math nodes,row sep=2.5em, column sep=1.0em,text height=1.5ex, 
 text depth=0.5ex]
 { 0 & p_1^*E_1 &  \mathcal{E}_1 \otimes p_2^*\mathcal{O}(-1) & p_1^*E_2 \otimes p_2^*\mathcal{O}(-1) & 0 \\
 0 & p_1^*E_1 \otimes p_1^*K_C \otimes p_2^*\mathcal{O}(-1) & \mathcal{E}_1 \otimes p_1^*K_C \otimes p_2^*\mathcal{O}(-2) & p_1^*E_2 \otimes p_1^*K_C \otimes p_2^*\mathcal{O}(-2) & 0.\\};
 \path[->](m-1-1) edge (m-1-2);
 \path[->](m-1-2) edge (m-1-3);
 \path[->](m-1-3) edge (m-1-4);
 \path[->](m-1-4) edge (m-1-5);
 \path[->](m-2-1) edge (m-2-2);
 \path[->](m-2-2) edge (m-2-3);
 \path[->](m-2-3) edge (m-2-4);
 \path[->](m-2-4) edge (m-2-5);
 \path[->](m-1-3) edge node[right]{$\psi_1$} (m-2-3);
 \end{tikzpicture}
\end{equation}
Again by $(a)$, $H^0(\mathcal{E}_1 \otimes p_1^*(E_1^* \otimes K_C) \otimes p_2^*\mathcal{O}(-2))= 0$ so that $p_1^*E_1$ is in the kernel of $\psi_1$. Since $H^0(p_1^*(\text{End}(E_2) \otimes K_C) \otimes p_2^*\mathcal{O}(-1))=0$, we get $\psi_1^2= 0$. Taking $j=2$ in \eqref{RE90}, we get
\begin{equation}\label{RE88}
\begin{tikzpicture}[baseline=(current  bounding  box.center)][font=\small]
  \matrix(m)[matrix of math nodes,row sep=2.5em, column sep=1.0em,text height=1.5ex, 
 text depth=0.5ex]
 { 0 & \mathcal{E}_1 &  \mathcal{E}_2 \otimes p_2^*\mathcal{O}(-1) & p_1^*E_3 \otimes p_2^*\mathcal{O}(-1) & 0 \\
 0 & \mathcal{E}_1 \otimes p_1^*K_C^2 \otimes p_2^*\mathcal{O}(-2) & \mathcal{E}_2 \otimes p_1^*K_C^2 \otimes p_2^*\mathcal{O}(-3) & p_1^*E_3 \otimes p_1^*K_C^2\otimes p_2^*\mathcal{O}(-3) & 0.\\};
 \path[->](m-1-1) edge (m-1-2);
 \path[->](m-1-2) edge (m-1-3);
 \path[->](m-1-3) edge (m-1-4);
 \path[->](m-1-4) edge (m-1-5);
 \path[->](m-2-1) edge (m-2-2);
 \path[->](m-2-2) edge (m-2-3);
 \path[->](m-2-3) edge (m-2-4);
 \path[->](m-2-4) edge (m-2-5);
 \path[->](m-1-3) edge node[right]{$\psi_2^2$} (m-2-3);
 \end{tikzpicture}
\end{equation}
By previous step $\mathcal{E}_1$ is in the kernel of $\psi_2^2$ and since $H^0(p_1^*(\text{End}(E_3) \otimes K_C^2) \otimes p_2^*\mathcal{O}(-2))=0$, $\psi_2^3= 0$. Inductively we have,  $\psi_i^{i+1}=0$. Thus $\psi$ induces the same filtration on $\mathcal{E}_j,$ as the filtration obtained by the Higgs field $\varphi_{\mid_{E^j}}$ on $E^j$ in (\ref{Mar31}). Hence, there is a bijection between $H^0(\text{Ad}(\mathcal{E}_j) \otimes p_1^*K_C \otimes p_2^*\mathcal{O}(-1))$ and the nilpotent Higgs fields of $E^j$, which gives the filtration of $E^j$ as in (\ref{Mar31}), which by remark \ref{NR1} is same as $H^0(C, E_{j} \otimes E_{j+1}^* \otimes K_C)$ .

\vspace{0.2 cm}

$(e)$ Tensoring the short exact sequence 
\[
0 \to \mathcal{E}_{j-2} \otimes p_2^*\mathcal{O}(1) \to \mathcal{E}_{j-1}  \to p_1^*E_j \to 0
\]
(see (\ref{NE21})) by $p_1^*E_{j+1}^* \otimes p_2^*\mathcal{O}(k)$, $k \geq 0$ any integer, we get
\begin{align*}
0 \to \mathcal{E}_{j-2} \otimes p_1^*E_{j+1}^* \otimes p_2^*\mathcal{O}(k+1) &\to \mathcal{E}_{j-1}  \otimes p_1^*E_{j+1}^* \otimes p_2^*\mathcal{O}(k)\\
&\to p_1^*(E_j \otimes E_{j+1}^*) \otimes p_2^*\mathcal{O}(k)\to 0.
\end{align*}
We have $H^0(C, E_j \otimes E_{j+1}^*)= 0 = H^2(\mathcal{E}_{j-2} \otimes p_2^*\mathcal{O}(k+1) \otimes p_1^*E_{j+1}^*)$ by By Remark \eqref{NR1} and $(a)$. Thus
\begin{align*}
H^1(\mathcal{E}_{j-1}  \otimes p_1^*E_{j+1}^* \otimes p_2^*\mathcal{O}(k)) &= H^1(\mathcal{E}_{j-2} \otimes p_1^*E_{j+1}^* \otimes p_2^*\mathcal{O}(k+1))\\
&\oplus \big( H^1(C, E_j \otimes E_{j+1}^*) \otimes H^0(\mathbb{P}^1, \mathcal{O}(k)) \big).
\end{align*}

\vspace{0.2 cm}

$(f)$ Tensoring the dual of the exact sequence 
\[
0 \to \mathcal{E}_{j-2} \otimes p_2^*\mathcal{O}(1) \to \mathcal{E}_{j-1}  \to p_1^*E_j \to 0
\]
(see (\ref{NE21})) by $p_1^*E_{j+1} \otimes p_2^*\mathcal{O}(-2)$, we get
\begin{align*}
0 \to p_1^*(E_{j}^* \otimes E_{j+1}) \otimes p_2^*\mathcal{O}(-2) &\to \mathcal{E}_{j-1}^* \otimes p_1^*E_{j+1} \otimes p_2^*\mathcal{O}(-2)\\
&\to \mathcal{E}_{j-2}^* \otimes p_1^*E_{j+1} \otimes p_2^*\mathcal{O}(-3) \to 0.
\end{align*}
By $(b)$, $H^2(\mathcal{E}_{j-1}^* \otimes p_1^*E_{j+1} \otimes p_2^*\mathcal{O}(-2)) = 0$. By $(a)$, $H^0(\mathcal{E}_{j-2}^* \otimes p_1^*E_{j+1} \otimes p_2^*\mathcal{O}(-1)) = 0$ and hence we have $H^0(\mathcal{E}_{j-2}^* \otimes p_1^*E_{j+1} \otimes p_2^*\mathcal{O}(-3)) = 0$. Thus
\begin{align*}
h^1(\mathcal{E}_{j-1}^* &\otimes p_1^*E_{j+1} \otimes p_2^*\mathcal{O}(-2)) = h^1(p_1^*(E_{j}^* \otimes E_{j+1}) \otimes p_2^*\mathcal{O}(-2))\\
&+ h^1(\mathcal{E}_{j-2}^* \otimes p_1^*E_{j+1} \otimes p_2^*\mathcal{O}(-3)) - h^2(p_1^*(E_{j}^* \otimes E_{j+1}) \otimes p_2^*\mathcal{O}(-2))\\
&= h^0(C, E_{j}^* \otimes E_{j+1}) \cdot h^1(\mathbb{P}^1, \mathcal{O}(-2) + h^1(\mathcal{E}_{j-2}^* \otimes p_1^*E_{j+1} \otimes p_2^*\mathcal{O}(-3))\\
& \,\,\,\,\,\,\,\,\,\,\, - h^1(C, E_j^* \otimes E_{j+1}).
\end{align*}
Similarly, tensoring the dual of the exact sequence 
\[
0 \to \mathcal{E}_{j-3} \otimes p_2^*\mathcal{O}(1) \to \mathcal{E}_{j-2}  \to p_1^*E_{j-1} \to 0
\]
(see (\ref{NE21})) by $p_1^*E_{j+1} \otimes p_2^*\mathcal{O}(-3)$, we get
\begin{align*}
0 \to p_1^*(E_{j-1}^* \otimes E_{j+1}) \otimes p_2^*\mathcal{O}(-3) &\to \mathcal{E}_{j-2}^* \otimes p_1^*E_{j+1} \otimes p_2^*\mathcal{O}(-3)\\
&\to \mathcal{E}_{j-3}^* \otimes p_1^*E_{j+1} \otimes p_2^*\mathcal{O}(-4) \to 0.
\end{align*}
By $(a)$ and $(b)$, 
\[
H^0(\mathcal{E}_{j-3}^* \otimes p_1^*E_{j+1} \otimes p_2^*\mathcal{O}(-4)) = 0 = H^2(\mathcal{E}_{j-2}^* \otimes p_1^*E_{j+1} \otimes p_2^*\mathcal{O}(-3)).
\] 
Since
\begin{align*}
h^2(p_1^*(E_{j-1}^* \otimes E_{j+1}) \otimes p_2^*\mathcal{O}(-3)) &= h^1(C, E_{j-1}^* \otimes E_{j+1}) \cdot h^1(\mathbb{P}^1, \mathcal{O}(-3))\\
&= h^0(C, E_{j-1} \otimes E_j^* \otimes K_C)  \cdot h^1(\mathbb{P}^1, \mathcal{O}(-3)) = 0
\end{align*}
by Remark \ref{NR1}, we have
\begin{align*}
h^1(\mathcal{E}_{j-2}^* \otimes p_1^*E_{j+1} \otimes p_2^*\mathcal{O}(-3)) = h^0(C, E_{j-1}^* &\otimes E_{j+1}) \cdot h^1(\mathbb{P}^1, \mathcal{O}(-3))\\
&+ h^1(\mathcal{E}_{j-3}^* \otimes p_1^*E_{j+1} \otimes p_2^*\mathcal{O}(-4))
\end{align*}
Inductively, we can write
\[
h^1(\mathcal{E}_{j-2}^* \otimes p_1^*E_{j+1} \otimes p_2^*\mathcal{O}(-3))= \sum_{i=0}^{j-2}h^0(C, E_{j-1-i}^* \otimes E_{j+1}) \cdot h^1(\mathbb{P}^1, \mathcal{O}(-3-i))
\]
so that
\begin{align*}
h^1(\mathcal{E}_{j-1}^* \otimes p_1^*E_{j+1} \otimes p_2^*\mathcal{O}(-2)) &= \sum_{i=0}^{j-2}h^0(C, E_{j-1-i}^* \otimes E_{j+1}) \cdot h^1(\mathbb{P}^1, \mathcal{O}(-3-i))\\
&+ h^0(C, E_{j}^* \otimes E_{j+1}^*)- h^1(C, E_{j}^* \otimes E_{j+1}).
\end{align*}

\vspace{0.2 cm}

$(g)$  Tensoring  $0 \to \mathcal{E}_{j-1} \otimes p_2^*\mathcal{O}(1) \to \mathcal{E}_{j}  \to p_1^*E_{j+1} \to 0$ (see (\ref{NE21})) by $\mathcal{E}_j^* \otimes  p_2^*\mathcal{O}(-1)$, we get
\[
0 \to \mathcal{E}_{j-1} \otimes \mathcal{E}_j^* \to (\text{Ad}(\mathcal{E}_j) \oplus \mathcal{O}) \otimes p_2^*\mathcal{O}(-1) \to \mathcal{E}_j^* \otimes p_1^*E_{j+1} \otimes p_2^*\mathcal{O}(-1) \to 0
\]
By $(a)$, $H^0(\mathcal{E}_j^* \otimes p_1^*E_{j+1} \otimes p_2^*\mathcal{O}(-1))= 0$. Thus it is enough to show that $H^0(\mathcal{E}_j^* \otimes \mathcal{E}_{j-1})= 0$.

Note that by $(d)$, a nilpotent  Higgs fields on $E^j$ which induces the filtration in \eqref{Mar31} on $E^j$,  gives a section $\psi_j \in H^0(\text{Ad}(\mathcal{E}_j \otimes p_1^*K_C \otimes p_2^*\mathcal{O}(-1))$ such that $\psi_j^{j+1}=0$ and vice versa where $j$ is the minimal integer such that, $\psi^{j+1}=0$.  Let $\psi_{j-1} $ be the section of     $ \text{Ad}(\mathcal{E}_{j-1}) \otimes p_1^*K_C \otimes p_2^*\mathcal{O}(-1)$ corresponding to the nilpotent Higgs field $\varphi$ restricted to $E^{j-1}$.

If there is a non-zero morphism $\mathcal{E}_j \to \mathcal{E}_{j-1}$, then the composition with 
\[
\mathcal{E}_{j-1} \xlongrightarrow{\psi_{j-1} }\mathcal{E}_{j-1} \otimes p_1^*K_C \otimes p_2^*\mathcal{O}(-1)) \to \mathcal{E}_j \otimes p_1^*K_C \otimes p_2^*\mathcal{O}(-2) \to  \mathcal{E}_j \otimes p_1^*K_C \otimes p_2^*\mathcal{O}(-1) 
\]
gives a section $\eta$ of $\text{Ad}(\mathcal{E}_j) \otimes p_1^*K_C \otimes p_2^*\mathcal{O}(-1)$, with $\eta^{j}=0$, a contradiction to the minimality of $j$. Thus, $H^0(\mathcal{E}_j^* \otimes \mathcal{E}_{j-1})= 0$.
\end{proof}

\begin{align*}
&\text{Assume that in the family of rational curves through $E$ as constructed in (\ref{NE3}),} \tag{\textasteriskcentered \textasteriskcentered} \label{assumption}\\
&\text{there is a rational curve through $E$ contained in the stable locus.}
\end{align*}

\vspace{0.2 cm}

\begin{remark}\label{R31}
\begin{enumerate}[(i)]
\item If the degree and rank of $E$ are co-primes then every semistable bundle is stable. Thus the hypothesis $($\ref{assumption}$)$ satisfied.

\item Let $E$ be a stable vector bundle of rank $2$ and degree zero. Suppose $E$ can be written as an extension 
\begin{equation}\label{April19}
0 \to \xi \to E \to \xi^{-1} \to 0.
\end{equation}
Let $d:=\text{deg}(\xi)$. Then $d \le -1$. If an extension in $\text{Ext}^1(\xi^{-1}, \xi^)$ is not stable, then it contains  a line subbundle $\eta$ of degree at least zero, which gives a non-zero section of $\eta^{-1} \otimes \xi^{-1}$. Now the degree of $\eta^{-1} \otimes \xi^{-1}$ is $\le -d$. Thus, the dimension of degree zero line bundles $\eta$ such that $\eta^{-1} \otimes \xi^{-1}$ has a section, is $\le -d$. Hence, the dimension of vector bundles containing a degree zero line subbundle $\eta$ such that $\eta^{-1} \otimes \xi^{-1}$ admits a nonzero section, is $\le -d + h^1(\eta^2)-1=g-2-d $.\\
On the other hand, the dimension of the vector bundles which fit in \ref{April19}, is $g-2d$. Therefore, the non-stable bundles occurring in the extension \ref{April19} has co-dimension at least $g-2d-g+2+d= 2-d \ge 
 3$. Thus, there is a rational curve in $\mathbb{P}(\text{Ext}^1(\xi^{-1}, \xi))$ which contained in the stable locus. I hope the assumption is true in general, may be known to the experts. Unfortunately, we are unable to include a proof of it. 
\end{enumerate}
\end{remark}

Let $E \in \mathcal{M}_C^s(n, \delta)$ be a wobbly bundle satisfying (\ref{assumption}) and let $f: \mathbb{P}^1 \longrightarrow \mathcal{M}_C^s(n, \delta)$ be a rational curve as obtained in (\ref{NE3}). Write $\mathcal{E} := \mathcal{E}_m$ (see (\ref{NE21})) and suppose $q$ is the anticanonical degree of the rational curve $f: \mathbb{P}^1 \longrightarrow \mathcal{M}_C^s(n, \delta)$. Then we have the following

\begin{proposition}\label{April8}
$h^0(\mathbb{P}^1, R^1{p_2}_*\text{Ad}(\mathcal{E}) \otimes \mathcal{O}(-1))= q + h^1(C, E_m^* \otimes E_{m+1})$, where $p_2$ is the projection to the second factor and $R^1{p_2}_*\text{Ad}(\mathcal{E})$ denotes the first direct image of $\text{Ad}(\mathcal{E})$.
\end{proposition}
\begin{proof}
Since $E$ is wobbly, we can obtain $E$ by successive extension as in (\ref{NL1}). Let $r_i, d_i$ be the rank and degree of $E_i$, respectively. Then by \cite[Lemma 3.6]{TB}, we have $q = 2\sum_{i < j}(r_id_j-r_jd_i)(j-i)$.

Tensoring the exact sequence \eqref{NE21} by $\mathcal{E}_j^* \otimes p_2^*\mathcal{O}(-1)$, we have
\begin{equation*}\label{NE4}
0 \to \mathcal{E}_{j-1} \otimes \mathcal{E}_j^* \to (\text{Ad}(\mathcal{E}_j)\oplus \mathcal{O}) \otimes p_2^*\mathcal{O}(-1) \to \mathcal{E}_j^* \otimes p_1^*E_{j+1} \otimes p_2^*\mathcal{O}(-1) \to 0
\end{equation*}
Since $H^0(\mathcal{E}_j^* \otimes p_1^*E_{j+1} \otimes p_2^*\mathcal{O}(-1)) = 0$ by Lemma \ref{nnl1} $(a)$, we have
\begin{align}\label{NE7}
\begin{split}
&h^1(\text{ Ad}(\mathcal{E}_j) \otimes p_2^*{\mathcal O}(-1)) = h^1(\mathcal{E}_{j-1} \otimes \mathcal{E}_j^*) + h^1( \mathcal{E}_j^* \otimes p_1^*E_{j+1} \otimes p_2^*\mathcal{O}(-1))\\
&- h^2(\mathcal{E}_{j-1} \otimes \mathcal{E}_j^*) + h^2(\text{Ad}(\mathcal{E}_j) \otimes p_2^*{\mathcal O}(-1))-h^2(\mathcal{E}_j^* \otimes p_1^*E_{j+1} \otimes p_2^*\mathcal{O}(-1))
\end{split}
\end{align}
On the other hand, tensoring the dual sequence of \eqref{NE21} by $\mathcal{E}_{j-1}$, we get
\[
0 \to \mathcal{E}_{j-1} \otimes p_1^*E_{j+1}^*  \to \mathcal{E}_{j-1} \otimes \mathcal{E}_j^*  \to (\text{Ad}(\mathcal{E}_{j-1}) \oplus \mathcal{O})\otimes p_2^*\mathcal{O}(-1) \to 0.
\]
Since $H^0(\text{Ad}(\mathcal{E}_{j-1}) \otimes p_2^*\mathcal{O}(-1)) = 0$ by Lemma \ref{nnl1} $(g)$, from cohomology exact sequence we get 
\begin{align}\label{NE8}
\begin{split}
h^1(\mathcal{E}_{j-1} &\otimes \mathcal{E}_j^*)= h^1(\mathcal{E}_{j-1} \otimes p_1^*E_{j+1}^*) + h^1(\text{Ad}(\mathcal{E}_{j-1})\otimes p_2^*\mathcal{O}(-1))\\
&- h^2(\mathcal{E}_{j-1} \otimes p_1^*E_{j+1}^*) + h^2(\mathcal{E}_{j-1} \otimes \mathcal{E}_j^*)-h^2(\text{Ad}(\mathcal{E}_{j-1})\otimes p_2^*\mathcal{O}(-1)).
\end{split}
\end{align}
Substituting $h^1(\mathcal{E}_j^* \otimes \mathcal{E}_{j-1})$ in \eqref{NE7}, we get
\begin{align}\label{NE9}
\begin{split}
&h^1(\text{ Ad}(\mathcal{E}_j) \otimes p^*{\mathcal O}(-1)) =  h^1(\text{Ad}(\mathcal{E}_{j-1})\otimes p_2^*\mathcal{O}(-1)) + h^1(\mathcal{E}_{j-1} \otimes p_1^*E_{j+1}^*)\\
&+h^1( \mathcal{E}_j^* \otimes p_1^*E_{j+1} \otimes p_2^*\mathcal{O}(-1)) + h^2(\text{ Ad}(\mathcal{E}_j) \otimes p_1^*{\mathcal O}(-1)) - h^2(\mathcal{E}_{j-1} \otimes p_1^*E_{j+1}^*)\\
&- h^2(\text{ Ad}(\mathcal{E}_{j-1}) \otimes p_1^*{\mathcal O}(-1))-h^2(\mathcal{E}_j^* \otimes p_1^*E_{j+1} \otimes p_2^*\mathcal{O}(-1)).
\end{split}
\end{align}

To prove the Lemma, will use induction on $m$. Let $m = 1$. Note that $\mathcal{E}_0 = p_1^*E_1$. By K\"{u}nneth formula, we have
\begin{equation}\label{RE95}
H^1(p_1^*\text{Ad}(E_1) \otimes p_2^*\mathcal{O}(-1))) = H^2(p_1^*\text{Ad}(E_1) \otimes p_2^*\mathcal{O}(-1)) = H^2(p_1^*(E_1 \otimes E_2^*)) = 0.
\end{equation}
Tensoring  the dual exact sequence of \eqref{NE11} by $p_1^*E_1$, we have
\[
0 \to p_1^*(E_1 \otimes E_2^*) \to \mathcal{E}_1^* \otimes p_1^*E_1 \to p_1^*\text{End}(E_1) \otimes p_2^*\mathcal{O}(-1) \to 0
\]
Using \eqref{RE95}, from the cohomology sequence, we get
\begin{equation}\label{RE97}
H^2(\mathcal{E}_1^* \otimes p_1^*E_1) = 0.
\end{equation}
Tensoring  \eqref{NE11} by $\mathcal{E}_1^* \otimes p_2^*\mathcal{O}(-1)$, we have
\[
0 \to \mathcal{E}_1^* \otimes p_1^*E_1  \to (\text{Ad}(\mathcal{E}_1) \oplus \mathcal{O}) \otimes p_2^*\mathcal{O}(-1) \to \mathcal{E}_1^* \otimes p_1^*E_2 \otimes p_2^*\mathcal{O}(-1) \to 0
\]
and using \eqref{RE97} we conclude
\begin{equation}\label{RE96}
H^2(\text{Ad}(\mathcal{E}_1) \otimes p_2^*\mathcal{O}(-1)) = H^2(\mathcal{E}_1^* \otimes p_1^*E_2 \otimes p_2^*\mathcal{O}(-1))
\end{equation}
Taking $j=1$ in \eqref{NE9} and combining \eqref{RE95}, \eqref{RE97} and \eqref{RE96}, we have
\begin{equation}\label{RE94}
h^1(\text{Ad}(\mathcal{E}_1) \otimes p_2^*\mathcal{O}(-1)) = h^1(C, E_1 \otimes E_2^*) + h^1(\mathcal{E}_1^* \otimes p_1^*E_2 \otimes p_2^*\mathcal{O}(-1)).
\end{equation}
Now tensoring  the dual exact sequence of \eqref{NE11} by $p_1^*E_2 \otimes p_2^*\mathcal{O}(-1)$, we have
\[
0 \to p_1^*\text{End}(E_2) \otimes p_2^*\mathcal{O}(-1) \to \mathcal{E}_1^* \otimes p_1^*E_2 \otimes p_2^*\mathcal{O}(-1) \to p_1^*(E_1^* \otimes E_2) \otimes p_2^*\mathcal{O}(-2) \to 0.
\]
By K\"{u}nneth formula, $ H^1(p_1^*\text{End}(E_2) \otimes p_2^*\mathcal{O}(-1)) = 0 = H^2(p_1^*\text{End}(E_2) \otimes p_2^*\mathcal{O}(-1))$. So we get
\[
H^1(\mathcal{E}_1^* \otimes p_1^*E_2 \otimes \mathcal{O}(-1)) = H^0(C, E_1^* \otimes E_2) \otimes H^1(\mathbb{P}^1, \mathcal{O}(-2)) = H^0(C, E_1^* \otimes E_2).
\]
Substituting this in \eqref{RE94}, we get
\[
h^1(\text{Ad}(\mathcal{E}_1) \otimes p_2^*\mathcal{O}(-1))= h^1(C, E_1 \otimes E_2^*) + h^0(C, E_1^* \otimes E_2).
\]
Since $H^1( \text{Ad}(\mathcal{E}_1)) = H^0(R^1p_* \text{Ad}(\mathcal{E}_1))$ by Leray Spectral Sequence, we have
$$
h^0(R^1p_* \text{Ad}(\mathcal{E}_1)) = h^1(C, E_1 \otimes E_2^*) + h^0(C, E_1^* \otimes E_2).
$$
Since $E^1= E$ is stable, $h^0(C, E_1 \otimes E_2^*)=0$. Thus, by Riemann-Roch we have,
\begin{align*}
h^1(C, E_1 \otimes E_2^*) &= (r_1d_2-r_2d_1) \,\, + \,\,  \,\, r_1r_2(g-1) \\
h^0(C, E_1^* \otimes E_2) &= (r_1d_2-r_2d_1) \,\, + \,\, r_1r_2(g-1) + h^0(C, E_1 \otimes E_2^* \otimes K_C).
\end{align*}
Therefore,
\begin{equation}\label{NE10}
h^0(\mathbb{P}^1, R^1{p_2}_*(\text{Ad}(\mathcal{E}_1) \otimes \mathcal{O}(-1)))= 2(r_1d_2-r_2d_1)+ h^0(C, E_1 \otimes E_2^* \otimes K_C),
\end{equation}
which proves our result for $m=1$.

Let's assume $m \geq 2$ and the result holds for $j < m$. Using Lemma \ref{nnl1} $(a), (d)$, from \eqref{NE9}, we have
\begin{align}
\begin{split}
h^1(\text{Ad}(\mathcal{E}_m) &\otimes p_2^*{\mathcal O}(-1)) = h^1(\text{Ad}(\mathcal{E}_{m-1})\otimes p_2^*\mathcal{O}(-1)) + h^1(\mathcal{E}_{m-1} \otimes p_1^*E_{m+1}^*)\\
&+ h^1(\mathcal{E}_m^* \otimes p_1^*E_{m+1} \otimes p_2^*\mathcal{O}(-1)) -h^0(C, E_{m-1} \otimes E_m^* \otimes K_C)\\
&+ h^0(C, E_m \otimes E_{m+1}^* \otimes K_C).
\end{split}
\end{align}
By induction hypothesis 
$$
h^1(\text{Ad}(\mathcal{E}_{m-1})\otimes p_2^*\mathcal{O}(-1)) = 2\sum_{i < j \leq m}(r_id_j-r_jd_i)(j-i) + h^1(C, E_{m-1}^* \otimes E_m).
$$
Write $p^{'}=2\sum_{i < j}(r_id_j-r_jd_i)(j-i), j \le m$.
Using Lemma \ref{nnl1} $(b), (e)$ and $(f)$, we get
\begin{align}
\begin{split}
h^1(\text{Ad}(\mathcal{E}_m) \otimes p_2^*{\mathcal O}(-1)) = p^{'} &+ \sum_{i=0}^{m-1}h^0(C, E_{m-i}^* \otimes E_{m+1}) \cdot h^1(\mathbb{P}^1, \mathcal{O}(-2-i))\\
&+ \sum_{i=0}^{m-1} h^1(C, E_{m-i} \otimes E_{m+1}^*)\cdot h^0(\mathbb{P}^1, \mathcal{O}(i)).
\end{split}
\end{align}
Note that by Remark \ref{NR1}, $h^0(C, E_{m-i} \otimes E_{m+1}^*)= 0$ for $i \ge 0$ and $h^1(C, E_{m-i}^* \otimes E_{m+1} )= 0$ for $i \ge 1$. Using $h^1(\mathbb{P}^1, \mathcal{O}(-2-i)) = h^0(\mathbb{P}^1, \mathcal{O}(i))$, we get
\begin{align*}
h^1(\text{Ad}(\mathcal{E}_m) \otimes p_2^*{\mathcal O}(-1)) = p^{'} &+ \sum_{i=0}^{m-1} (i+1)\big( h^0(C, E_{m-i}^* \otimes E_{m+1})\\
&+h^1(C, E_{m-i} \otimes E_{m+1}^*) \big)\\
= 2\sum_{i < j}&(r_id_j-r_jd_i)(j-i) + h^1(C, E_m^* \otimes E_{m+1}).
\end{align*}
This completes the proof.
\end{proof}

\begin{theorem}
 A  vector bundle $E \in \mathcal{M}_C^s(n, \delta)$ satisfying $($\ref{assumption}$)$ is wobbly,  if and only if there is a  nonfree  rational curve $\ell$ in $\mathcal{M}_C^s(n, \delta)$ passing through $E$. In particular, if the rank and degree are co-prime, then $E$ is wobbly, if and only if there is a  nonfree  rational curve $\ell$ in $\mathcal{M}_C^s(n, \delta)$ passing through $E$.
\end{theorem}
\begin{proof}
Let $\ell$ be a nonfree  rational curve in $\mathcal{M}_C^s(n, \delta)$ containing $E$. Let $h: T^*\mathcal{M}_C^s(n, \delta) \to W$ be the restriction of the Hitchin map to the co-tangent bundle as in \eqref{RE98}. Let $h|_{\ell}$  be the restriction of $h$ to ${T^*\mathcal{M}_C^s(n, \delta)}|_{\ell}$. Since $\ell$ is nonfree, ${T^*\mathcal{M}_C^s(n, \delta)}|_{\ell}= \oplus_{i=1}^N\mathcal{O}_{\mathbb{P}^1}(a_i), a_1 \ge a_2\ge ...\ge a_N$ with $a_1 > 0$.

Let $s$ be a nonzero section of ${T^*\mathcal{M}_C^s(n, \delta)}|_{\ell}$ corresponding to the direct summand $\mathcal{O}_{\mathbb{P}^1}(a_1)$. Since $\ell$ is complete, $h(s(\ell))$ is constant. On the other hand, for any section $s \in H^0(\mathbb{P}^1, \mathcal{O}_{\mathbb{P}^1}(a_1))$ there is a point $p \in \ell$ such that $s(p)=0$. Hence $h(s(\ell))= 0$. Since $\mathcal{O}_{\mathbb{P}^1}(a_1)$ is globally generated,  there is a section $s$ such that $s(p) \ne 0$, but $h(s(\ell))=0$.  Thus the restriction of the Hitchin map to the cotangent space $T_p^*\mathcal{M}_C(n, \delta) \to W$ has positive dimensional fiber over zero for every $p \in \ell$. By \cite[Corollary 1.2]{CP}, $p$ is not very stable bundle. In particular, $E$ is wobbly.

Conversely, let $E$ be a wobbly bundle.  We will show that there is a nonfree rational curve in $\mathcal{M}_C^s(n, \delta)$ through $E$. The key idea is as follows:\\
If $f: \mathbb{P}^1 \to \mathcal{M}_C^s(n, \delta)$ be a free rational curve of anti-canonical degree $t$, then $f^*T\mathcal{M}_C^s(n, \delta)= \oplus_{i=1}^N\mathcal{O}_{\mathbb{P}^1}(b_i)$ such that $b_1 \geq \cdots \ge b_N \ge 0$ and $\sum_{i=1}^N b_i= t$. Thus $h^0(\mathbb{P}^1, f^*T\mathcal{M}_C^s(n, \delta)) = N +t$ and $h^0(\mathbb{P}^1, f^*T\mathcal{M}_C^s(n, \delta) \otimes \mathcal{O}(-1))= \sum_{i=1}^Nb_i= t$. On the other hand, if $f: \mathbb{P}^1 \to \mathcal{M}_C^s(n, \delta)$ is not free, then we have $b_N < 0$. Let $u \in \{ 1, \cdots, N \}$ be the largest integer such that $b_u \ge 0$. Then $u < N$, $\sum_{i=1}^{u}b_i = t + \sum_{j=u+1}^N |b_j|$ and
 \begin{equation}\label{RE80}
 h^0(\mathbb{P}^1, f^*T\mathcal{M}_C^s(n, \delta) \otimes \mathcal{O}(-1))=\sum_{i=1}^u((b_i-1)+1)=\sum_{i=1}^ub_i=t + \sum_{j=u+1}^N|b_j|.
 \end{equation}
Since $\sum_{j=u+1}^N|b_j| >1$, $h^0(\mathbb{P}^1, f^*T\mathcal{M}_C^s(n, \delta) \otimes \mathcal{O}(-1)) > t$.  Thus it is sufficient to produce a rational curve  $f:\mathbb{P}^1 \to \mathcal{M}_C^s(n, \delta)$ through $E$ such that $h^0(\mathbb{P}^1, f^*T\mathcal{M}_C^s(n, \delta) \otimes \mathcal{O}(-1)) > t$.

Let $f:\mathbb{P}^1 \to \mathcal{M}_C(n, \delta)$ be a rational curve through $E$ described in \eqref{NE3}. The anticanonical degree of the rational curve is $ t:=2\sum_{i < j}(r_id_j-r_jd_i)(j-i)$. Note that $f^*T\mathcal{M}_C^s(n, \delta)$ can be identified with $R^1{p_2}_*\text{Ad}(\mathcal{E}_m)$. From \eqref{NL1}, we have $h^1(C, E_m^* \otimes E_{m+1}) = h^0(C, E_m \otimes E_{m+1}^* \otimes K_C) > 0$. Now the Theorem follows from Lemma \ref{April8}.
\end{proof}

\begin{remark}\label{R100}
Note that if $h^0(\mathbb{P}^1, f^*T\mathcal{M}_C^s(n, \delta))= t+1$, then from \eqref{RE80}, we have $\sum_{i=u+1}^N|b_i|=1$. Thus, $u= N-1$ and $b_N=-1$. In other words, the restriction of the  tangent bundle $T\mathcal{M}_C^s(n, \delta)$ to the rational curve $f:\mathbb{P}^1 \to \mathcal{M}_C^s(n, \delta)$ has unique direct summand of negative degree.   
\end{remark}

\section{Purity of Wobbly Locus}\label{S4}

Let $\mathcal{W}_C$ be the set of wobbly bundles. By definition, its complement consists of very stable bundles. By \cite[Proposition 3.5]{L}, a very stable vector bundle is stable
and that the locus of very stable bundles is a non-empty open subset of $\mathcal{M}_C(n, \delta)$.
Hence, $\mathcal{W}$ is a closed subset of $\mathcal{M}_C(n, \delta)$.

Let $E \in \mathcal{W}_C$. Then there is a non-zero nilpotent Higgs field $\varphi$ which induces a filtration as in \ref{Mar31}. Thus, $E$ can be written by successive extensions as in \eqref{NL1}. Our goal is to show that if $E$ is a general point of some irreducible component of $\mathcal{W}_C$, then $h^0(C, E_{m+1}^* \otimes E_m \otimes K_C)= 1$. Let $r_m, r_{m+1}, d_m, d_{m+1}$ are respective rank and degree of the vector bundles $E_m$ and $E_{m+1}$ appearing in  \eqref{NL1} and set $s_{(E, \varphi)}:= r_md_{m+1}-r_{m+1}d_m$. Note that if $s_{(E, \varphi)} < r_mr_{m+1}(g-1)-1$, then by Riemann-Roch, $h^0(C, E_{m+1}^* \otimes E_m \otimes K_C) \ge 2$.

Let $\mathcal{W}_1$ be the set of all  wobbly bundles $E$ such that $E$ admits a nonzero nilpotent Higgs field $\varphi$ with $s_{(E, \varphi)} < r_mr_{m+1}(g-1)-1$ and let $\mathcal{W}_2 := \mathcal{W}_C \setminus \mathcal{W}_1$.

\begin{proposition}\label{NP1}
$\mathcal{W}_1 \subset \overline{\mathcal{W}_2}$, where $\overline{\mathcal{W}_2}$ denotes the closure of $\mathcal{W}_2$.
\end{proposition}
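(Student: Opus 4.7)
The plan is to show that every $E_0 \in \mathcal{W}_1$ is the limit of a one-parameter family of stable wobbly bundles $\{E_t\}_{t \in \Delta}$ with $E_t \in \mathcal{W}_2$ for $t \neq 0$, the deformation being realized by Hecke modifications that shift the type invariants $(r_i, d_i)$ of the filtration from Lemma \ref{NL1}.

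First I would recast the defining inequality of $\mathcal{W}_1$ in terms of Euler characteristic. A direct Riemann-Roch computation gives $\chi(E_{m+1}^* \otimes E_m \otimes K_C) = r_{m+1}d_m - r_m d_{m+1} + r_m r_{m+1}(g-1)$, so the condition $r_m d_{m+1} - r_{m+1}d_m < r_m r_{m+1}(g-1)$ defining $\mathcal{W}_1$ is exactly $\chi(E_{m+1}^* \otimes E_m \otimes K_C) > 0$. In $\mathcal{W}_1$ the nonvanishing of $H^0(E_{m+1}^* \otimes E_m \otimes K_C)$ is therefore automatic, and generic in the moduli of pairs $(E_m, E_{m+1})$ of the given ranks and degrees; in $\mathcal{W}_2$, $\chi \le 0$ and wobbliness is a genuine codimension condition on the pair. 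This asymmetry is the conceptual reason one expects $\mathcal{W}_2$ to be dense in $\mathcal{W}$.

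To realize the degeneration I would pick a general point $p \in C$ and a one-parameter family of hyperplanes $\{H_t\}_{t \in \Delta}$ of the fiber $(E_0)_p$, with $H_0$ chosen compatibly with the image of $(E^{m-1})_p$ so that the $t=0$ specialization recovers $E_0$, while $H_t$ is transverse to the filtration for $t \neq 0$. Taking the elementary modification of $E_0$ along $H_t$, balanced by a compensating modification at a second point to preserve the determinant $\delta$, yields a flat family $\{E_t\}$. The induced filtration on $E_t$ has $d_m^{(t)}$ decreased and $d_{m+1}^{(t)}$ increased by an integral amount, so the quantity $r_m d_{m+1}^{(t)} - r_{m+1} d_m^{(t)}$ strictly increases. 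Iterating if needed, the shifted invariants satisfy $r_m d_{m+1}^{(t)} - r_{m+1} d_m^{(t)} \ge r_m r_{m+1}(g-1)$, placing $E_t$ in $\mathcal{W}_2$.

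The principal obstacle is verifying that the family $E_t$ remains stable and wobbly of the intended type. Stability will follow for generic modifications by an openness argument. For wobbliness I would transport the nilpotent Higgs field $\varphi_0$ of $E_0$ through the elementary transformation and invoke semicontinuity, using that $\chi(E_{m+1}^{(t)*} \otimes E_m^{(t)} \otimes K_C)$ varies continuously in the family to force $h^0$ to remain positive. The subtlest step---the heart of the argument---is ensuring that the Higgs field one produces on $E_t$ is nilpotent of the same Jordan length $m+1$, so that the invariants produced by Lemma \ref{NL1} for $E_t$ are precisely the shifted ones and not a further degeneration. This will require a careful genericity analysis of the induced filtration on $E_t$ and a dimension count ruling out accidental collapses of the Jordan type.
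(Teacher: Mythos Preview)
Your approach is essentially the paper's. The paper invokes \cite[Proposition 1.9]{RTLC} (Russo--Teixidor on the Lange conjecture) as a black box: given $E$ in an extension $0 \to E^{m-1} \to E \to E_{m+1} \to 0$ with $E^{m-1}$ itself an extension of $E_m$ by $E^{m-2}$, one deforms $E$ to a stable $\tilde E$ sitting in extensions of the same shape but with $\deg \tilde E_m = d_m - 1$ and $\deg \tilde E_{m+1} = d_{m+1} + 1$, so that $s_{\tilde E} = s_E + r_m + r_{m+1}$; iterate until $s \ge r_m r_{m+1}(g-1)$. The underlying construction in \cite{RTLC} is exactly the elementary-transformation/Hecke-modification argument you outline. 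Your Euler-characteristic reformulation of the $\mathcal{W}_1$ inequality is not in the paper but is the right way to see why the nonvanishing of $H^0(E_{m+1}^*\otimes E_m\otimes K_C)$ is generic on one side and a genuine condition on the other.

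Two remarks on the issues you flag. First, the compensating modification to fix the determinant is not discussed in the paper; the Russo--Teixidor deformation preserves the total degree but not $\delta$ on the nose, so this does need attention (or one passes through non-fixed-determinant moduli and restricts). Second, the ``subtlest step'' you isolate---that the deformed $\tilde E$ really lands in $\mathcal{W}_2$, i.e.\ that the filtration Lemma~\ref{NL1} produces for $\tilde E$ has the intended invariants rather than some other Jordan type---is handled in the paper only by asserting that $h^0(\tilde E_m \otimes K_C \otimes \tilde E_{m+1}^*) \ne 0$ persists along the deformation. So your caution here is well placed; the paper is equally terse on this point.
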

\begin{proof}
Our idea is to use \cite[Proposition  1.9]{RTLC} to deform the elements in $\mathcal{W}_1$ to an element in $\mathcal{W}_2$. Let $E \in \mathcal{W}_1$.  We have short exact sequences
  \[
0 \to E^{m-1} \to E \to E_{m+1} \to 0 \,\,\, \text{and} \,\,\, 0 \to E^{m-2} \to E^{m-1} \to E_m \to 0
\] 
such that $h^0(C,  E_{m+1}^* \otimes E_m \otimes K_C) \ge 1$ and $s_{(E, \varphi)} < r_mr_{m+1}(g-1)$. Then
as in \cite[Proposition 1.9]{RTLC}, one can deform $E$ to a stable vector bundle $\widetilde{E}$ such that $\widetilde{E}$ fits in an exact sequence of the form
\[
 0 \to \widetilde{E}^{m-1} \to \widetilde{E} \to \widetilde{E}_{m+1} \to 0
 \] 
 and 
 $\widetilde{E}^{m-1}$ fits in an exact sequence of the form
  \[
  0 \to \widetilde{E}^{m-2} \to \widetilde{E}^{m-1} \to \widetilde{E}_m \to 0,
  \]
where rank $\widetilde{E} =$ rank $E$, $\deg \widetilde{E} = \deg E$ and $\deg \widetilde{E}^{m-1} = \deg E^{m-1} - 1$. The degrees of $\widetilde{E}_m$ and $\widetilde{E}_{m+1}$ are one more than that of   $E_{m+1}$ and $E_{m}$, respectively, and $h^0(C, {\widetilde{E}_{m+1}}^* \otimes \widetilde{E}_m \otimes K_C) \ne 0$.

For convenience, we will quickly recall the construction of $\widetilde{E}$. Consider the following diagrams of elementary transformations:
\begin{equation}
\begin{tikzpicture}[baseline=(current  bounding  box.center)][font=\small]
  \matrix(m)[matrix of math nodes,row sep=2.0em, column sep=4.5em,text height=1.5ex, 
 text depth=0.25ex]
 { &  0 & 0 \\
 0 & \widetilde{E}^{m-1} & \overline{E} & E_{m+1} & 0  \\
 0 & E^{m-1} & E & E_{m+1} & 0 \\
&  \mathcal{O}_P & \mathcal{O}_P \\
&  0 & 0  \\};
\path[->](m-1-2) edge (m-2-2);
\path[->](m-2-2) edge (m-3-2);
\path[->](m-3-2) edge (m-4-2);
\path[->](m-4-2) edge (m-5-2);
\path[->](m-1-3) edge (m-2-3);
\path[->](m-2-3) edge (m-3-3);
\path[->](m-3-3) edge (m-4-3);
\path[->](m-4-3) edge (m-5-3);
\path[->](m-2-1) edge (m-2-2);
\path[->](m-2-2) edge  (m-2-3);
\path[->](m-2-3) edge  (m-2-4);
\path[->](m-2-4) edge  (m-2-5);
\path[->](m-3-1) edge (m-3-2);
\path[->](m-3-2) edge  (m-3-3);
\path[->](m-3-3) edge  (m-3-4);
\path[->](m-3-4) edge  (m-3-5);
\path[->] (m-2-4) edge (m-3-4);
 \end{tikzpicture}
\end{equation}

\begin{equation}\label{FEB1}
\begin{tikzpicture}[baseline=(current  bounding  box.center)][font=\small]
  \matrix(m)[matrix of math nodes,row sep=2.0em, column sep=4.5em,text height=1.5ex, 
 text depth=0.25ex]
 { &  0 & 0 \\
 0 & \widetilde{E}_{m+1}^*  & \widetilde{E}^* & {(\widetilde{E}^{m-1})}^* & 0  \\
 0 & E_{m+1}^*  & \overline{E}^* & {(\widetilde{E}^{m-1})}^* & 0   \\
&  \mathcal{O}_P & \mathcal{O}_P \\
&  0 & 0  \\};
\path[->](m-1-2) edge (m-2-2);
\path[->](m-2-2) edge (m-3-2);
\path[->](m-3-2) edge (m-4-2);
\path[->](m-4-2) edge (m-5-2);
\path[->](m-1-3) edge (m-2-3);
\path[->](m-2-3) edge (m-3-3);
\path[->](m-3-3) edge (m-4-3);
\path[->](m-4-3) edge (m-5-3);
\path[->](m-2-1) edge (m-2-2);
\path[->](m-2-2) edge  (m-2-3);
\path[->](m-2-3) edge  (m-2-4);
\path[->](m-2-4) edge  (m-2-5);
\path[->](m-3-1) edge (m-3-2);
\path[->](m-3-2) edge  (m-3-3);
\path[->](m-3-3) edge  (m-3-4);
\path[->](m-3-4) edge  (m-3-5);
\path[->] (m-2-4) edge (m-3-4);
 \end{tikzpicture}
\end{equation}
Taking dual of the first horizontal exact sequence in \eqref{FEB1}, we have 
\[
 0 \to \widetilde{E}^{m-1} \to \widetilde{E} \to \widetilde{E}_{m+1} \to 0. \,\, \,
 \]
By similar construction we have, 
  \[
  0 \to \widetilde{E}^{m-2} \to \widetilde{E}^{m-1} \to \widetilde{E}_m \to 0.
  \]
Since $h^0(C, E_{m+1}^* \otimes E_m \otimes K_C) \ne 0$, a nonzero morphism $E_{m+1} \to E_m \otimes K_C$ induces a morphism $\widetilde{E}_{m+1} \to \widetilde{E}_m \otimes K_C$ with the following  commutative diagram:

\begin{equation}\label{FEB2}
\begin{tikzpicture}[baseline=(current  bounding  box.center)][font=\small]
  \matrix(m)[matrix of math nodes,row sep=2.0em, column sep=4.0em,text height=1.5ex, 
 text depth=0.25ex]
 { 0 & E_{m+1}  & \widetilde{E}_{m+1} & \mathcal{O}_P & 0  \\
 0 & E_m \otimes K_C  & \widetilde{E}_m \otimes K_C & \mathcal{O}_P \otimes K_C & 0   \\};
\path[->](m-1-1) edge (m-1-2);
\path[->](m-1-2) edge (m-1-3);
\path[->](m-1-3) edge (m-1-4);
\path[->](m-1-4) edge (m-1-5);
\path[->](m-1-2) edge (m-2-2);
\path[->] (m-1-3) edge (m-2-3);
\path[->](m-1-4) edge (m-2-4);
\path[->](m-2-1) edge (m-2-2);
\path[->](m-2-2) edge  (m-2-3);
\path[->](m-2-3) edge  (m-2-4);
\path[->](m-2-4) edge  (m-2-5);
\end{tikzpicture}
\end{equation}
The stability of a general vector bundle $\widetilde{E}$ obtained this way follows from \cite[Proposition  1.9]{RTLC}.  Note that the bundle $\widetilde{E}$ admits a non-zero Higgs fild $\widetilde{\varphi}$ such that  $s_{(\widetilde{E}, \widetilde{\varphi})}= s_{(E, \varphi)}+ r_m + r_{m+1}$. If $s_{E}+ r_m + r_{m+1} < r_mr_{m+1}(g-1)$, then we continue the process to deform $E$ to a stable bundle $E^{'}$  with a nilpotent Higgs field $\varphi'$ such that $E^{'}$ can be obtained as  extensions 
  \[
0 \to {E'}^{m-1} \to E' \to {E'}_{m+1} \to 0 \,\,\, \text{and} \,\,\, 0 \to {E'}^{m-2} \to {E'}^{m-1} \to {E'}_m \to 0
\] 
with $H^0(C, ({E'}_{m+1})^* \otimes {E'}_m \otimes K_C) \ne 0$ and $s_{(E', \varphi')} \ge r_mr_{m+1}(g-1)$.
  Thus, $\mathcal{W}_1 \subset \overline{\mathcal{W}_2}$. 
\end{proof}


\begin{lemma}\label{NL2}
Let $\mathcal{W}_2^1$ be any component of  $\mathcal{W}_2$ and let $E \in \mathcal{W}_2^1$ be a general point satisfying the hypothesis \eqref{assumption}. Then there is a rational curve $f:\mathbb{P}^1 \to \mathcal{M}_C^s(n, \delta)$ passing through $E$ such that $f^*T\mathcal{M}_C^s(n, \delta)$ has exactly one direct summand of negative degree.
\end{lemma}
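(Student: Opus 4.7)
The strategy is to apply the construction of Theorem \ref{IT2} together with Remark \ref{NR1} in the case $m=1$. Recall from the proof of Theorem \ref{IT2} that a filtration of $E$ with successive quotients $E_1,\ldots,E_{m+1}$ produces a rational curve $f\colon \mathbb{P}^1 \to \mathcal{M}_C(n,\delta)$ through $E$ satisfying
\[
h^0(\mathbb{P}^1, f^*T\mathcal{M}_C(n,\delta) \otimes \mathcal{O}(-1)) \;=\; p \;+\; \sum_{i=1}^{m} h^0(C, E_i \otimes E_{i+1}^* \otimes K_C).
\]
Writing the splitting $f^*T\mathcal{M}_C(n,\delta) = \bigoplus_i \mathcal{O}(a_i)$, one has $p = \sum_i a_i$ and $h^0(\mathbb{P}^1, f^*T\mathcal{M}_C(n,\delta) \otimes \mathcal{O}(-1)) = \sum_{a_i \ge 1} a_i$, so the right-hand side equals $p+1$ exactly when $f^*T\mathcal{M}_C(n,\delta)$ has precisely one direct summand of negative degree (necessarily $\mathcal{O}(-1)$). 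It therefore suffices to exhibit, for a general $E\in\mathcal{W}_2$, a short exact sequence $0 \to E_1 \to E \to E_2 \to 0$ with $s:=r_1 d_2 - r_2 d_1 \ge r_1 r_2(g-1)$ and $h^0(C, E_1 \otimes E_2^* \otimes K_C) = 1$.

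The first step is to reduce to $m=1$. Given a nonzero nilpotent Higgs field $\varphi$ on $E$ with $\varphi^{m+1}=0$, $\varphi^m\ne 0$ and the associated filtration $E^i=\ker\varphi^{i+1}$ of Lemma \ref{NL1}, the induced associated-graded maps $\bar\varphi_i\colon E_{i+1} \to E_i \otimes K_C$ are all nonzero: if $v\in E^i\setminus E^{i-1}$ then $\varphi^i(v)\ne 0$, equivalently $\varphi(v)\notin E^{i-2}$, so $\bar\varphi_i(v+E^{i-1})\ne 0$. Hence $h^0(C, E_i\otimes E_{i+1}^*\otimes K_C)\ge 1$ for every $i=1,\ldots,m$, and the hypothesis of Remark \ref{NR1} can be satisfied only when $m=1$. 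To conclude that a general $E\in\mathcal{W}_2$ does admit a $\varphi$ with $\varphi^2=0$, I would stratify $\mathcal{W}_2$ by the filtration type $(r_1,d_1,\ldots,r_{m+1},d_{m+1})$. Each stratum is parametrized by a chain $E_1,\ldots,E_{m+1}$ subject to $m$ successive Brill-Noether conditions $h^0(E_i\otimes E_{i+1}^*\otimes K_C)\ge 1$, together with successive extension data. Each such Brill-Noether condition sits in a regime where the corresponding Euler characteristic is $\le 0$, and so imposes an additional codimension; a dimension count then shows that the strata with $m\ge 2$ lie in codimension $\ge 2$ in $\mathcal{M}_C(n,\delta)$, whereas the $m=1$ strata satisfying the $\mathcal{W}_2$-inequality have codimension exactly one. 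Thus a general $E\in\mathcal{W}_2$ has $m(E)=1$.

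The second step is to ensure that $h^0(C, E_1\otimes E_2^*\otimes K_C) = 1$ (not merely $\ge 1$) for a general such $E$. The $\mathcal{W}_2$ condition combined with Riemann-Roch gives $\chi(E_1\otimes E_2^*\otimes K_C) = r_1r_2(g-1) - s \le 0$, so a generic pair $(E_1,E_2)$ of the given numerical type has $h^0=0$; the wobbly locus is the Brill-Noether jump $\{h^0\ge 1\}$, which is generically of pure codimension one and on which $h^0=1$ by semicontinuity, stability of $E$ ruling out accidental destabilizing jumps. Feeding this back into the displayed formula gives $h^0(\mathbb{P}^1, f^*T\mathcal{M}_C(n,\delta)\otimes\mathcal{O}(-1)) = p+1$, and Theorem \ref{IT2}'s construction yields the desired rational curve. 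The main obstacle I anticipate is the stratification-and-dimension step: making rigorous that each additional Brill-Noether condition beyond the first genuinely raises the codimension by at least one, with no unexpected collapse coming from the extension moduli.
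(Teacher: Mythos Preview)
Your route differs from the paper's in one structural way: the paper keeps the filtration length $m$ general and argues directly that for a general $E\in\mathcal{W}_2$ the graded pieces $E_1,\ldots,E_{m+1}$ are themselves general, so that $h^0(E_{m+1}^*\otimes E_m\otimes K_C)=1$ while the intermediate $h^0(E_{i+1}^*\otimes E_i\otimes K_C)$ vanish.  The key technical input the paper invokes, and which you do not, is the twisted Brill--Noether result \cite[Theorem 0.3]{RTLC}: under the numerical hypothesis defining $\mathcal{W}_2$ the locus $W^0_{r_{m+1},d_{m+1}}(E_m\otimes K_C)$ has the expected dimension, so its general point has $h^0=1$.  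Your argument appeals to ``Brill--Noether jump generically of pure codimension one'' without citing a result that guarantees this; in the twisted higher-rank setting that is exactly what \cite{RTLC} supplies, and it is not automatic.

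Your observation that for the \emph{kernel} filtration of a nilpotent $\varphi$ every graded map $\bar\varphi_i\colon E_{i+1}\to E_i\otimes K_C$ is nonzero is correct, and it shows that the paper's claim ``$h^0(E_{m-i+1}^*\otimes E_{m-i}\otimes K_C)=0$ for general $E_{m-i}$'' can only be reconciled with Lemma~\ref{NL1} when $m=1$.  In that sense your explicit reduction to $m=1$ clarifies what the paper's argument is really doing.  However, the stratification-and-dimension step you flag as the ``main obstacle'' is a genuine gap in your write-up: you assert that the $m\ge 2$ strata sit in codimension $\ge 2$ but do not prove it, and this is precisely where the expected-dimension theorem of \cite{RTLC} does the work.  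Without that input (or an equivalent irreducibility/dimension statement for the relevant twisted Brill--Noether loci) neither your codimension count nor the conclusion $h^0=1$ at a general point is justified.
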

\begin{proof}
We can obtain $E$ by successive extensions as in \eqref{NL1} with $h^0(C, {E_{m+1}}^* \otimes E_m \otimes K_C) \ne 0 $ and $s_{E}\ge r_mr_{m+1}(g-1)$. Let ${E'}_m$ be a generic vector bundle of rank $r_m$ and degree $d_m$ such that $r_md_{m+1}-r_{m+1}d_m \ge r_mr_{m+1}(g-1)$. Let $W^0_{r_{m+1}, d_{m+1}}({E'}_m \otimes K_C) $ be the twisted Brill-Noether loci of stable bundles ${E'}_{m+1}$ of rank $r_{m+1}$ and degree $d_{m+1}$ such that $h^0(C, {{E'}_{m+1}}^* \otimes {E'}_m \otimes K_C) \ge 1$. Then the expected dimension $\rho^0$ of the loci is given by
\begin{equation}
\begin{split}
  \rho^0 & = r_{m+1}(r_{m+1}-r_m)(g-1) +r_{m+1}(r_m(2g-2)+d_m)-r_md_{m+1}\\
& ={r_{m+1}}^2(g-1)+r_mr_{m+1}(g-1) -(r_md_{m+1}-r_{m+1}d_m) 
 \end{split}
 \end{equation}
 
By \cite[Theorem 0.3]{RTLC} and its proof, if $(r_md_{m+1}-r_{m+1}d_m)-r_mr_{m+1}(g-1) <{r_{m+1}}^2(g-1)$, then  $W^0_{r_{m+1}, d_{m+1}}({E'}_m \otimes K_C) $ is irreducible of the expected dimension $\rho^0$. In fact for irreducibility one can argue as follows: If $F \in W^0_{r_{m+1}, d_{m+1}}({E'}_m \otimes K_C)$ then the map $F \to {E'}_m \otimes K_C$ allows us to write $F$ as an extension of a vector bundle $F_1$ of smaller rank and certain  degree by a subbundle $F_2 \subset {E'}_m \otimes K_C$ of complementary rank and degree. Now consider the spaces of such extensions. The moduli spaces of  semistable vector  bundles of fixed rank  is irreducible. The fibers are $H^1(F_2^*\otimes F_1)$, the stability of the extension guarantees that these dimensions are constant. It follows then that the total space is irreducible.

Note that if ${E'}_{m+1}$ is a general element of $W^0_{r_{m+1}, d_{m+1}}({E'}_m \otimes K_C)$ then one has 
\[
h^0(C, ({E'}_{m+1})^* \otimes {E'}_m \otimes K_C)=1.
\]
Thus, if $E \in \mathcal{W}_2^1$ is general, then $h^0(C, {E_{m+1}}^* \otimes E_m \otimes K_C)=1$. Therefore, by remark \eqref{R100}, the rational curve through $E$ constructed in previous section is non-free and the restriction of the tangent bundle to it has exactly one direct summand of negative degree which is $-1$.
\end{proof}

\begin{proposition}\label{NP2}
Assume that a general element of every component of $\mathcal{W}_2$ satisfies the hypothesis \eqref{assumption}. Let  $E$ be a general point of an irreducible component of $\mathcal{W}_2$. Then  there is an irreducible component of $\mathcal{W}$ containing $E$ of co-dimension one in $\mathcal{M}_C^s(n, \delta)$.
\end{proposition}
\begin{proof}
Let $E$ be a general element of an irreducible component $\mathcal{W}_2^1$ of $\mathcal{W}_2$. Then by Lemma \eqref{NL2}, there is a rational curve $f: \mathbb{P}^1 \to \mathcal{M}_C^s(n, \delta)$ through $E$ such that $f^*T\mathcal{M}_C^s(n, \delta)$ has exactly one direct summand of negative degree which is $-1$. Let $\text{Hom}^{\text{nonfree}}(\mathbb{P}^1, \mathcal{M}_C^s(n, \delta))$ be the closed subset of $\text{Hom}(\mathbb{P}^1, \mathcal{M}_C^s(n, \delta))$ consisting of nonfree rational curves in $\mathcal{M}_C^s(n, \delta)$ and let $V$ be an irreducible component of $\text{Hom}^{\text{nonfree}}(\mathbb{P}^1, \mathcal{M}_C^s(n, \delta))$ containing $f$. Then for a general element $g:\mathbb{P}^1 \to \mathcal{M}_C^s(n, \delta)$ in $V$, $g^*T\mathcal{M}_C^s(n, \delta)$ has exactly one direct summand of negative degree. 
Therefore,  by Lemma \ref{July7} $\overline{F(\mathbb{P}^1 \times V)}$ has co-dimension one in $\mathcal{M}_C^s(n, \delta)$. Note that every element in $V$ is nonfree. Since $f \in V, E \in \overline{F(\mathbb{P}^1 \times V)} $, $\overline{F(\mathbb{P}^1 \times V)} \subset \mathcal{W}$, which proves the proposition.
\end{proof}

 \begin{theorem}
 If $(n, d)=1$, then $\mathcal{W}_C$ is of pure codimension one.
 \end{theorem}
 \begin{proof}
 By remark \ref{R31}, every vector bundle $E \in \mathcal{W}_C$, satisfies the hypothesis \eqref{assumption}. Then the Theorem follows from the Proposition \ref{NP1} and the Proposition \ref{NP2}.
 \end{proof}

\vspace{5 mm}

{\it Acknowledgement:} I would like to thank to Dr. Suratno Basu and Dr. Krishanu Dan for many useful discussions, comments and suggestions which helped me a lot to make the exposition better. I also would like to thank Prof.  Montserrat Teixidor I Bigas for many important discussions. I also would like thank Prof. Izzet Coskun and Prof. Eric Riedl  for Lemma \ref{July7}.

\end{document}